\theoremstyle{definition}
\newtheorem*{defn*}{\protect\definitionname}
\theoremstyle{plain}
\newtheorem{thm}{\protect\theoremname}[section]
\theoremstyle{remark}
\newtheorem{rem}[thm]{\protect\remarkname}
\theoremstyle{plain}
\newtheorem{lem}[thm]{\protect\lemmaname}
\theoremstyle{definition}
\newtheorem{example}[thm]{\protect\examplename}
\theoremstyle{plain}
\newtheorem{cor}[thm]{\protect\corollaryname}
\theoremstyle{plain}
\newtheorem{prop}[thm]{\protect\propositionname}
\newcommand{\lin}{\operatorname{lin}}
\newcommand*{\e}{\mathrm{e}}
\newcommand{\R}{\mathbb{R}}
\newcommand{\N}{\mathbb{N}}
\newcommand{\dom}{\operatorname{dom}}
\newcommand{\ran}{\operatorname{ran}}
\renewcommand{\d}{\,\mathrm{d}}
\renewcommand{\tilde}{\widetilde}
\newcommand{\tr}{\operatorname{tr}}
\theoremstyle{definition}
\providecommand{\corollaryname}{Corollary}
\providecommand{\definitionname}{Definition}
\providecommand{\examplename}{Example}
\providecommand{\lemmaname}{Lemma}
\providecommand{\propositionname}{Proposition}
\providecommand{\remarkname}{Remark}
\providecommand{\theoremname}{Theorem}
\begin{document}
\title{M-Accretivity via Boundary Systems}
\author{Sascha Trostorff\thanks{Mathematisches Seminar, CAU Kiel, Germany}}
\maketitle
\begin{abstract}
\textbf{Abstract.} We consider skew-symmetric operators on a Hilbert
space and study m-accretive restrictions of their negative adjoints.
Using the theory of boundary systems, we provide a full characterisation
of all those m-accretive restrictions, linear and nonlinear ones.
The result is then applied to port-Hamiltonian systems of arbitrary
order.
\end{abstract}
\textbf{Keywords.} M-accretive Operators, Boundary Systems, Port-Hamiltonian
Systems\\
\label{=00005Cnoindent}\textbf{MSC2020. }47H06,\textbf{ }47N20.

\section{Introduction}

M-accretive operators on Hilbert spaces play a fundamental role in
the theory of partial differential equations. In the linear case,
the negative of m-accretive operators are precisely the generators
of contraction semigroups by the famous Lumer-Phillips Theorem (see
e.g. \cite[Chapter II, Theorem 3.15]{Engel_Nagel2000}); that is,
for an m-accretive operator $A$, the corresponding linear Cauchy
problem 
\[
u'(t)=-Au(t),\quad u(0)=u_{0}
\]
is well-posed for each $u_{0}\in\overline{\dom}(A)=H$, where $H$
denotes the underlying Hilbert space. This theory generalises directly
to the nonlinear case, where m-accretive operators are precisely the
negative generators of nonlinear contraction semigroups. This result
was first proved by Komura in \cite{Komura1967}. That means, that
also in the case of a nonlinear m-accretive operator $A$, the nonlinear
Cauchy problem 
\[
u'(t)=-A(u(t)),\quad u(0)=u_{0}
\]
is well-posed for each $u_{0}\in\overline{\dom}(A).$ Hence, one way
to obtain the well-posedness of Cauchy problems is to prove the m-accretivity
of the spatial differential operator. We remark here, that m-accretivity
does not only imply well-posedness of Cauchy problems, but plays also
a crucial role in more general differential equations (linear and
nonlinear) and build a cornerstone in the theory of evolutionary equations
and inclusions (see \cite{STW2022}).

In order to prove that a certain differential operator is m-accretive,
boundary conditions come into play. Indeed, in most cases, the differential
operator is indeed linear and the nonlinearity occurs in the imposed
boundary condition; i.e., the differential operator with boundary
condition is a restriction of a linear differential operator without
boundary condition. Thus, a natural question arises: Which boundary
conditions lead to m-accretive operators? In this article, we give
an answer to this question for restrictions of negative adjoints of
skew-symmetric operators by employing the theory of boundary systems.
From an application point of view, the consideration of skew-symmetric
operators is quite natural, as most (if not all) equations in mathematical
physics can be written as suitable systems with skew-symmetric spatial
differential operators. This is the key observation behind the theory
of evolutionary equations and goes to back to Picard, \cite{Picard2009}. 

Boundary systems were first introduced by Schubert et al. in \cite{SSVW2015}
and provide a generalisation of the well established theory of boundary
triplets (see e.g. \cite[Section 3.1]{Gorbachuk1991} or \cite[Section 14]{Schmudgen2012}).
In the particular situation of boundary triplets for skew-symmetric
operators we refer to \cite{Wegner2017}. A comparison of both concepts,
boundary systems and boundary triplets, was provided in \cite{Waurick_Wegner_2018},
where it was shown that a skew-symmetric operator has always a boundary
system but a boundary triplet only if it has a skew-selfadjoint extension.
Hence, boundary systems are indeed a more general concept than boundary
triplets. Both, boundary triplets and boundary systems were successfully
employed to prove characterisation results for \textbf{linear} m-accretive
extensions of skew-symmetric operators (which are automatically restrictions
of the negative adjoint, see e.g. \cite[Proposition 2.8]{Wegner2017}
or \prettyref{cor:extension_vs_restriction} below). 

In this article we show, that the theory of boundary systems can also
be used to prove a characterisation result for \textbf{all} (linear
and nonlinear) m-accretive realisations (these are in general no extensions
of the skew-symmetric operator but restrictions of its negative adjoint).
The proof relies on a recent result by Picard and the author, \cite{Picard_Trostorff_2023},
where a concrete boundary system is used to characterise all m-accretive
realisations of a skew-symmetric operator. We use this characterisation
to generalise the statement to any boundary system associated with
the skew-symmetric operator. 

The article is structured as follows: We begin to recall the notions
of m-accretive operators, boundary systems for skew-symmetric operators
and restate the main result of \cite{Picard_Trostorff_2023}. Then,
we formulate and prove our main result in Section 3. Section 4 is
devoted to an application of the theory developed before. We will
provide a characterisation of all boundary conditions for port-Hamiltonian
system of arbitrary order, yielding an m-accretive realisation of
the port-Hamiltonian operator. For the theory of port-Hamiltonian
systems we refer to the monograph \cite{JacobZwart2012} and to the
PhD theses \cite{Villegas2007,Augner2016}. We emphasise that in the
case of linear realisations, the characterisation result was known
before, but to our best knowledge, a similar result for nonlinear
realisations does not exist in the literature.

Throughout, all Hilbert spaces are assumed to be real. However, the
results can easily be transferred to the complex case by passing to
the realification; that is, by considering the space as a real space
and equip it with the real part of the complex inner product as the
new inner product. 

\section{Preliminaries}

This section is devoted to collect some results, which will be needed
in the subsequent sections. Throughout, $H$ denotes a Hilbert space
with inner product $\langle\cdot,\cdot\rangle$. We begin with the
definition of m-accretive operators.
\begin{defn*}
A mapping $B\colon\dom(B)\subseteq H\to H$ is called \textbf{accretive},
if for all $u,v\in\dom(B)$
\[
\langle B(u)-B(v),u-v\rangle\geq0.
\]
$B$ is called \textbf{m-accretive}, if $B$ is accretive and $1+B$
is onto.
\end{defn*}
\begin{rem}
\label{rem:m-accretive}\begin{enumerate}[(a)]

\item If $B$ is m-accretive and $A\colon\dom(A)\subseteq H\to H$
is an accretive extension of $B,$ then $A=B.$ Indeed, for $x\in\dom(A)$
we find an element $y\in\dom(B)$ such that $y+B(y)=x+A(x)$. Since
$A$ extends $B$, we infer that 
\[
-\|x-y\|^{2}=\langle A(x)-B(y),x-y\rangle=\langle A(x)-A(y),x-y\rangle\geq0,
\]
which yields $x=y\in\dom(B)$ and thus, $A=B.$ 

\item The notions of accretivity and m-accretivity can also be defined
for binary relations $B\subseteq H\times H$. Then it turns out, that
m-accretive relations are precisely the maximal (with respect to inclusion)
accretive relations. This is the celebrated Theorem of Minty, \cite{Minty1962}
(see also \cite[Theorem 17,1,7]{STW2022}) .

\end{enumerate}
\end{rem}

We state and prove a result for m-accretive operators, which will
be useful in the subsequent parts. For a Lipschitz-continuous mapping
$f$ we denote by $|f|_{\mathrm{Lip}}$ the smallest Lipschitz-constant.
\begin{lem}
\label{lem:onto_m_accretive}Let $B\colon\dom(B)\subseteq H\to H$.
Then the following statements are equivalent:

\begin{enumerate}[(i)]

\item $B$ is m-accretive,

\item $B$ is accretive and there exists $\lambda>0$ such that $\lambda+B$
is onto,

\item For each $\mu>0$ the operator $\mu+B$ is bijective and its
inverse is Lipschitz-continuous with $|(\mu+B)^{-1}|_{\mathrm{Lip}}\leq\frac{1}{\mu}$. 

\end{enumerate}
\end{lem}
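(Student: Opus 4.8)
The plan is to establish the cycle of implications (i) $\Rightarrow$ (ii) $\Rightarrow$ (iii) $\Rightarrow$ (i). The implication (i) $\Rightarrow$ (ii) is immediate, taking $\lambda=1$ in the definition of m-accretivity. The backbone of the whole argument is a single coercivity estimate coming from accretivity: for any $\mu>0$ and $u,v\in\dom(B)$,
\[
\langle(\mu+B)(u)-(\mu+B)(v),u-v\rangle=\mu\|u-v\|^{2}+\langle B(u)-B(v),u-v\rangle\geq\mu\|u-v\|^{2},
\]
so by Cauchy--Schwarz $\|(\mu+B)(u)-(\mu+B)(v)\|\geq\mu\|u-v\|$. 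This already yields injectivity of $\mu+B$ and, whenever $\mu+B$ happens to be onto, shows that $(\mu+B)^{-1}$ is Lipschitz-continuous with $|(\mu+B)^{-1}|_{\mathrm{Lip}}\leq\frac{1}{\mu}$.

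The main work lies in (ii) $\Rightarrow$ (iii), where I have to upgrade surjectivity of $\lambda+B$ for a single $\lambda>0$ to surjectivity of $\mu+B$ for every $\mu>0$. I would run a continuation argument: given $f\in H$, solving $(\mu+B)(u)=f$ is equivalent to the fixed point equation $u=(\lambda+B)^{-1}\bigl(f-(\mu-\lambda)u\bigr)$, which is meaningful because $(\lambda+B)^{-1}$ is a well-defined Lipschitz map by the estimate above. Since $(\lambda+B)^{-1}$ has Lipschitz constant at most $1/\lambda$, the right-hand side defines a contraction on $H$ precisely when $|\mu-\lambda|<\lambda$, i.e. $\mu\in(0,2\lambda)$; Banach's fixed point theorem then provides a solution, so $\mu+B$ is onto for all such $\mu$. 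A doubling argument closes the gap: setting $S\coloneqq\{\mu>0:\mu+B\text{ is onto}\}$, I have shown that $\lambda\in S$ implies $(0,2\lambda)\subseteq S$, so $S$ is downward closed and $\sup S\geq 2\sup S$, forcing $\sup S=\infty$ and hence $S=(0,\infty)$. Combined with the coercivity estimate, this gives bijectivity together with the Lipschitz bound for every $\mu>0$.

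For (iii) $\Rightarrow$ (i), surjectivity of $1+B$ is contained in (iii), so only accretivity must be recovered. Fixing $u,v\in\dom(B)$ and $\mu>0$ and abbreviating $f=(\mu+B)(u)$, $g=(\mu+B)(v)$, the bound $\|u-v\|\leq\frac{1}{\mu}\|f-g\|$ reads $\mu^{2}\|u-v\|^{2}\leq\|\mu(u-v)+(B(u)-B(v))\|^{2}$; expanding the right-hand side and cancelling the common term yields $0\leq 2\mu\langle B(u)-B(v),u-v\rangle+\|B(u)-B(v)\|^{2}$, and dividing by $2\mu$ and letting $\mu\to\infty$ produces $\langle B(u)-B(v),u-v\rangle\geq0$, i.e. accretivity.

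I expect the continuation step in (ii) $\Rightarrow$ (iii) to be the only genuine obstacle; the remaining implications are direct consequences of the coercivity estimate and a limiting argument.
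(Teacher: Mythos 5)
Your proposal is correct and follows essentially the same route as the paper: the same coercivity estimate gives injectivity and the Lipschitz bound, the same fixed-point/contraction argument extends surjectivity from $\lambda$ to all $\mu\in(0,2\lambda)$, and the same expansion-and-limit argument recovers accretivity from (iii). Your supremum formulation of the doubling step is just a slightly more explicit version of the paper's ``bootstrapping argument.''
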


\begin{proof}
(i) $\Rightarrow$ (ii): That is obvious.\\
(ii) $\Rightarrow$ (iii): Since $B$ is accretive, we have 
\[
\|(\mu+B)(u)-(\mu+B)(v)\|\|u-v\|\geq\langle(\mu+B)(u)-(\mu+B)(v),u-v\rangle\geq\mu\|u-v\|^{2}\quad(u,v\in\dom(B))
\]
and hence, $\mu+B$ is injective for each $\mu>0$ and $(\mu+B)^{-1}\colon\ran(\mu+B)\to H$
is Lipschitz-continuous with $|(\mu+B)^{-1}|_{\mathrm{Lip}}\leq\frac{1}{\mu}$.
It remains to prove that $\mu+B$ is onto. For $\mu=\lambda$ this
is satisfied by assumption. \\
First, let $0<\mu<\lambda$. Then, for $f\in H$ the mapping 
\[
H\ni u\mapsto(\lambda+B)^{-1}(f+(\lambda-\mu)u)
\]
is a strict contraction and hence, possesses a fixed point by the
contraction principle. This fixed point then satisfies $\mu u+B(u)=f$
and thus, $\mu+B$ is onto. Moreover, note that the same argument
applies for $\lambda<\mu<2\lambda$ and hence, by a bootstrapping
argument, we infer that $\mu+B$ is onto for each $\mu>0$.\\
(iii) $\Rightarrow$(i): It suffices to show that $B$ is accretive,
since $1+B$ is onto by assumption. Let $u,v\in\dom(B).$ Then we
compute 
\begin{align*}
\|u-v\|^{2} & =\|(\mu+B)^{-1}(\mu+B)(u)-(\mu+B)^{-1}(\mu+B)(v)\|^{2}\\
 & \leq\frac{1}{\mu^{2}}\|\mu(u-v)+B(u)-B(v)\|^{2}\\
 & =\frac{1}{\mu^{2}}\left(\mu^{2}\|u-v\|^{2}+2\mu\langle B(u)-B(v),u-v\rangle+\|B(u)-B(v)\|^{2}\right)
\end{align*}
and hence, 
\[
0\leq2\langle B(u)-B(v),u-v\rangle+\frac{1}{\mu}\|B(u)-B(v)\|^{2}
\]
for each $\mu>0.$ Letting $\mu\to\infty,$we infer that $B$ is accretive. 
\end{proof}
The other crucial tool for our result are boundary systems for skew-symmetric
operators. We begin to introduce the basic setting. 
\begin{defn*}
Let $A_{0}\colon\dom(A_{0})\subseteq H\to H$ be a skew-symmetric
closed linear operator and set $A\coloneqq-A_{0}^{\ast}.$ We call
a triple $(F,G_{1},G_{2})$ a \textbf{boundary system for $A$},\textbf{
}if $G_{1},G_{2}$ are Hilbert spaces, $F\colon\dom(A)\to G_{1}\times G_{2}$
is linear and onto and 
\[
\forall u,v\in\dom(A):\,\langle Au,v\rangle+\langle u,Av\rangle=\langle F_{1}u,F_{1}v\rangle_{G_{1}}-\langle F_{2}u,F_{2}v\rangle_{G_{2}},
\]
where $F_{1}$ and $F_{2}$ are the components of $F$ with values
in $G_{1}$ and $G_{2}$, respectively.
\end{defn*}
The notion of boundary systems is motivated by the following example.
\begin{example}
Let $a,b\in\R$ with $a<b$ and define $A_{0}\colon H_{0}^{1}(a,b)\subseteq L_{2}(a,b)\to L_{2}(a,b)$
by $A_{0}u=u'.$ Here $H_{0}^{1}(a,b)$ denotes the classical Sobolev
space with vanishing boundary values; that is,
\[
H_{0}^{1}(a,b)=\{u\in H^{1}(a,b)\,;\,u(a)=u(b)=0\}.
\]
An easy computation shows that $A\coloneqq-A_{0}^{\ast}$ is given
by $Au=u'$ with $\dom(A)=H^{1}(a,b).$ Consequently, integration
by parts yields 
\[
\langle Au,v\rangle+\langle u,Av\rangle=\langle u',v\rangle+\langle u,v'\rangle=u(b)v(b)-u(a)v(a)\quad(u,v\in\dom(A)).
\]
Thus, if we set $G_{1}=G_{2}\coloneqq\R$ and $F\colon\dom(A)\to\R\times\R$
with $Fu=(u(b),u(a))$, we obtain a boundary system for $A$. Note
that $F$ is onto as for $x,y\in\R$ we can define 
\[
u(t)\coloneqq\frac{(b-t)y+(t-a)x}{b-a}\quad(t\in[a,b])
\]
and obtain $Fu=(x,y).$ 
\end{example}

Let us collect some simple properties of boundary systems. Note that
$\dom(A)$ is a Hilbert space itself, if we equip it with the inner
product 
\[
\langle u,v\rangle_{A}\coloneqq\langle u,v\rangle+\langle Au,Av\rangle\quad(u,v\in\dom(A)).
\]
We will regard $\dom(A)$ as this Hilbert space without further mentioning. 
\begin{lem}
\label{lem:bd_systems_easy}Let $A_{0}\colon\dom(A_{0})\subseteq H\to H$
be skew-symmetric and closed and set $A\coloneqq-A_{0}^{\ast}.$ Moreover,
let $(F,G_{1},G_{2})$ be a boundary system for $A$. Then $\ker F=\dom(A_{0})$
and $F$ is bounded. 
\end{lem}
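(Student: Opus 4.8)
The plan is to translate everything into the skew-symmetric structure first. Since $A_0$ is closed (and densely defined, so that $A_0^{\ast}$ is meaningful), we have $A^{\ast}=(-A_0^{\ast})^{\ast}=-A_0^{\ast\ast}=-A_0$; in particular $\dom(A^{\ast})=\dom(A_0)$, with $A^{\ast}u=-A_0 u$ there, and $A_0\subseteq A$ so that $Au=A_0 u=-A^{\ast}u$ for $u\in\dom(A_0)$. It is convenient to abbreviate the boundary form by writing $[u,v]\coloneqq\langle F_1 u,F_1 v\rangle_{G_1}-\langle F_2 u,F_2 v\rangle_{G_2}$, so the defining identity of the boundary system reads $[u,v]=\langle Au,v\rangle+\langle u,Av\rangle$ for all $u,v\in\dom(A)$. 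The single recurring device is that, because $F$ is onto, every prescribed pair $(g_1,g_2)\in G_1\times G_2$ is realised as $(F_1 v,F_2 v)$ for some $v\in\dom(A)$; choosing signs appropriately converts the indefinite form $[\cdot,\cdot]$ into a genuine sum of squares.

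For the inclusion $\dom(A_0)\subseteq\ker F$, I would take $u\in\dom(A_0)$. Then for every $v\in\dom(A)$ one has $\langle u,Av\rangle=\langle v,A^{\ast}u\rangle$ and $\langle Au,v\rangle=\langle -A^{\ast}u,v\rangle=-\langle v,A^{\ast}u\rangle$, so that $[u,v]=0$ for all $v$. Using surjectivity, pick $v$ with $F_1 v=F_1 u$ and $F_2 v=-F_2 u$; this yields $0=[u,v]=\|F_1 u\|_{G_1}^{2}+\|F_2 u\|_{G_2}^{2}$, hence $Fu=0$. Conversely, for $u\in\ker F$ the identity gives $[u,v]=0$, so $\langle Av,u\rangle=\langle u,Av\rangle=-\langle Au,v\rangle$ for all $v\in\dom(A)$; the right-hand side is bounded by $\|Au\|\,\|v\|$, whence $u\in\dom(A^{\ast})=\dom(A_0)$. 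Together these give $\ker F=\dom(A_0)$.

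For boundedness I would invoke the closed graph theorem. Since $A=-A_0^{\ast}$ is closed, $(\dom(A),\langle\cdot,\cdot\rangle_A)$ is a Hilbert space, and so is $G_1\times G_2$; it therefore suffices to show that the linear map $F$ has closed graph. So suppose $u_n\to u$ in the graph norm and $Fu_n\to(g_1,g_2)$ in $G_1\times G_2$. For each fixed $v\in\dom(A)$ the identity $[u_n,v]=\langle Au_n,v\rangle+\langle u_n,Av\rangle$ passes to the limit on both sides, giving $\langle g_1-F_1 u,F_1 v\rangle_{G_1}-\langle g_2-F_2 u,F_2 v\rangle_{G_2}=0$ for all $v$. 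Choosing, again by surjectivity, $v$ with $F_1 v=g_1-F_1 u$ and $F_2 v=-(g_2-F_2 u)$ collapses this to $\|g_1-F_1 u\|_{G_1}^{2}+\|g_2-F_2 u\|_{G_2}^{2}=0$, so $Fu=(g_1,g_2)$ and $F$ is closed.

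The adjoint identity $A^{\ast}=-A_0$ and the passage to the limit are routine. The one genuinely load-bearing idea, used in all three places, is the surjectivity-plus-sign-choice trick that turns the indefinite boundary form into a sum of squares; the main point to watch is that the test element $v$ realising a prescribed pair $(F_1 v,F_2 v)$ exists \emph{precisely} because $F$ is onto, so the argument would collapse without that hypothesis.
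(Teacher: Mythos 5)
Your proof is correct and follows essentially the same route as the paper: the identification $\ker F=\dom(A_{0})$ via the boundary identity together with surjectivity of $F$, and boundedness via the closed graph theorem. The only cosmetic difference is that you test against a specific sign-flipped element $v$ with $(F_{1}v,F_{2}v)=(F_{1}u,-F_{2}u)$, whereas the paper phrases the same use of surjectivity by observing that $(F_{1}u,-F_{2}u)$ is orthogonal to the full range of $F$.
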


\begin{proof}
Let $u\in\dom(A).$ Then using that $F$ is onto, we obtain 
\begin{align*}
u\in\ker F & \Leftrightarrow\forall v\in\dom(A):\:\langle(F_{1}u,-F_{2}u),F(v)\rangle_{G_{1}\times G_{2}}=0\\
 & \Leftrightarrow\forall v\in\dom(A):\:\langle Au,v\rangle+\langle u,Av\rangle=0,\\
 & \Leftrightarrow u\in\dom(A^{\ast})=\dom(A_{0}).
\end{align*}
To prove the boundedness of $F$, it suffices to show that $F$ is
closed by the closed graph theorem. For doing so, let $(u_{n})_{n}$
in $\dom(A)$ with $u_{n}\to u$ with respect to the graph norm of
$A$ and $Fu_{n}\to g$ in $G_{1}\times G_{2}$. Then for each $v\in\dom(A)$
we have 
\begin{align*}
\langle F_{1}u,F_{1}v\rangle_{G_{1}}-\langle F_{2}u,F_{2}v\rangle_{G_{2}} & =\langle Au,v\rangle+\langle u,Av\rangle\\
 & =\lim_{n\to\infty}\langle Au_{n},v\rangle+\langle u_{n},Av\rangle\\
 & =\lim_{n\to\infty}\langle F_{1}u_{n},F_{1}v\rangle_{G_{1}}-\langle F_{2}u_{n},F_{2}v\rangle_{G_{2}}\\
 & =\langle g_{1},F_{1}v\rangle_{G_{1}}-\langle g_{2},F_{2}v\rangle_{G_{2}}\\
 & =\langle(g_{1},-g_{2}),Fv\rangle_{G_{1}\times G_{2}}.
\end{align*}
By the surjectivity of $F$ we infer that $(F_{1}u,-F_{2}u)=(g_{1},-g_{2})$;
i.e., $Fu=g$, which shows the closedness of $F$. 
\end{proof}
As it turns out, each adjoint of a skew-symmetric operator possesses
a boundary system.
\begin{lem}
\label{lem:standard_bd_sys}Let $A_{0}\colon\dom(A_{0})\subseteq H\to H$
be a skew-symmetric closed linear operator and set $A\coloneqq-A_{0}^{\ast}.$
Then $\ker(1-A)$ and $\ker(1+A)$ are closed subspaces of $\dom(A)$.
Moreover, setting 
\[
\pi_{1}\colon\dom(A)\to\ker(1-A),\quad\pi_{-1}\colon\dom(A)\to\ker(1+A)
\]
as the corresponding projections, we infer that $\left((\pi_{1},\pi_{-1}),\ker(1-A),\ker(1+A)\right)$
is a boundary system for $A$, if we equip $\ker(1-A)$ and $\ker(1+A)$
with the graph norm of $A$. 
\end{lem}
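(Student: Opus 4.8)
The plan is to verify the three claims in order: first that $\ker(1-A)$ and $\ker(1+A)$ are closed in $\dom(A)$, then that the map $F \coloneqq (\pi_1,\pi_{-1})$ is onto, and finally that it satisfies the requisite Green-type identity. For the closedness, I would observe that $A$ is a closed operator (being the negative adjoint of $A_0$), so $1-A$ and $1+A$ are closed; their kernels are therefore closed in $H$, and since the graph norm is stronger than the $H$-norm, they are closed in $\dom(A)$ as well. I would also want to record that $\dom(A) = \ker(1-A) \oplus \ker(1+A)$ as an orthogonal decomposition with respect to $\langle\cdot,\cdot\rangle_A$; this is what makes $\pi_1$ and $\pi_{-1}$ genuine orthogonal projections and is the structural heart of the lemma.

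The decomposition is where I expect the main work to sit. The key computation is that for $u \in \ker(1-A)$ and $v \in \ker(1+A)$ one has $Au = u$ and $Av = -v$, so
\[
\langle u,v\rangle_A = \langle u,v\rangle + \langle Au,Av\rangle = \langle u,v\rangle - \langle u,v\rangle = 0,
\]
giving orthogonality of the two kernels in the graph inner product. To get that they span $\dom(A)$, I would split an arbitrary $u \in \dom(A)$ via $u = \tfrac12(u+Au) + \tfrac12(u-Au)$ and check that the first summand lies in $\ker(1-A)$ and the second in $\ker(1+A)$. This requires knowing $\tfrac12(u\pm Au) \in \dom(A)$ with $A\bigl(\tfrac12(u+Au)\bigr) = \tfrac12(u+Au)$, i.e.\ that $A(Au) = u$ on the relevant subspace; the clean way to see this is that $A$ restricted to either kernel acts as $\pm\mathrm{id}$, and one must confirm $Au \in \dom(A)$ for the summands, which follows because the summands themselves are being shown to be eigenvectors. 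Establishing this orthogonal direct sum identifies $\pi_1,\pi_{-1}$ with $u \mapsto \tfrac12(u+Au)$ and $u \mapsto \tfrac12(u-Au)$, and surjectivity of $F$ onto $\ker(1-A)\times\ker(1+A)$ is then immediate, since for any $(a,b)$ in the product the element $a+b \in \dom(A)$ maps to $(\pi_1(a+b),\pi_{-1}(a+b)) = (a,b)$ by orthogonality.

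It remains to verify the abstract boundary-system identity with $G_1 = \ker(1-A)$, $G_2 = \ker(1+A)$, and $F_1 = \pi_1$, $F_2 = \pi_{-1}$, where the inner products on $G_1,G_2$ are the graph inner products $\langle\cdot,\cdot\rangle_A$. Writing $u = \pi_1 u + \pi_{-1}u$ and similarly for $v$, I would expand $\langle Au,v\rangle + \langle u,Av\rangle$ using $A\pi_1 = \pi_1$ and $A\pi_{-1} = -\pi_{-1}$ together with the $H$-orthogonality relations that follow from the graph-orthogonality established above. The cross terms should cancel, leaving
\[
\langle Au,v\rangle + \langle u,Av\rangle = 2\langle \pi_1 u,\pi_1 v\rangle - 2\langle \pi_{-1}u,\pi_{-1}v\rangle,
\]
and since for eigenvectors $\langle\pi_1 u,\pi_1 v\rangle_A = 2\langle\pi_1 u,\pi_1 v\rangle$ and $\langle\pi_{-1}u,\pi_{-1}v\rangle_A = 2\langle\pi_{-1}u,\pi_{-1}v\rangle$, this matches $\langle F_1 u,F_1 v\rangle_{G_1} - \langle F_2 u,F_2 v\rangle_{G_2}$ exactly. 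The only genuinely delicate point is making sure the graph-norm inner product is the correct one to place on the boundary spaces so that the factors of $2$ line up; once that normalisation is fixed, the identity is a direct algebraic expansion.
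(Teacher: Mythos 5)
There is a genuine gap: your claimed orthogonal decomposition $\dom(A)=\ker(1-A)\oplus_{A}\ker(1+A)$ is false in general. The splitting $u=\tfrac12(u+Au)+\tfrac12(u-Au)$ produces eigenvectors of $A$ only when $A^{2}u=u$: one needs $Au\in\dom(A)$, which is not guaranteed for an arbitrary $u\in\dom(A)$, and even then $A\bigl(\tfrac12(u+Au)\bigr)=\tfrac12(Au+A^{2}u)$ equals $\tfrac12(u+Au)$ only if $A^{2}u=u$. Your justification (``which follows because the summands themselves are being shown to be eigenvectors'') is circular. The paper's own introductory example makes the failure concrete: for $A=\partial$ on $H^{1}(a,b)$ the spaces $\ker(1-A)$ and $\ker(1+A)$ are each one-dimensional (spanned by $\e^{t}$ and $\e^{-t}$), so their sum cannot exhaust $H^{1}(a,b)$. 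More structurally, $\ker(\pi_{1},\pi_{-1})=\dom(A_{0})$ (cf.\ Lemma \ref{lem:bd_systems_easy}), so the map you want to be a bijection onto the product fails to be injective whenever $\dom(A_{0})\neq\{0\}$, which is the typical case.

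The correct statement, and the actual content of the paper's proof, is the three-fold decomposition $\dom(A)=\dom(A_{0})\oplus_{A}\ker(1-A)\oplus_{A}\ker(1+A)$: one first shows $\dom(A_{0})^{\bot_{\dom(A)}}=\ker(1-A^{2})$ using the skew-symmetry of $A_{0}$, and only on $\ker(1-A^{2})$ does your splitting $u=\tfrac12(u+Au)+\tfrac12(u-Au)$ apply. The missing summand propagates through the rest of your argument: $\pi_{1}$ and $\pi_{-1}$ are not given by $u\mapsto\tfrac12(u\pm Au)$ on all of $\dom(A)$, and in the Green identity you must write $u=\pi_{0}u+\pi_{1}u+\pi_{-1}u$ and verify that every term involving $\pi_{0}u$ cancels, which uses $\langle A_{0}\pi_{0}u,v\rangle=-\langle\pi_{0}u,Av\rangle$ together with the graph-orthogonality of $\dom(A_{0})$ to the two kernels. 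Your surjectivity argument (send $(a,b)$ to $a+b$) and the factor-of-two normalisation at the end are correct and coincide with the paper's.
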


\begin{proof}
It is obvious that $\ker(1-A)$ and $\ker(1+A)$ are closed subspaces
of $\dom(A).$ Moreover, $\dom(A_{0})$ is a closed subspace of $\dom(A)$
and for its orthogonal complement, we obtain 
\begin{align*}
u\in\dom(A_{0})^{\bot_{\dom(A)}} & \Leftrightarrow\forall v\in\dom(A_{0}):\langle u,v\rangle_{A}=0,\\
 & \Leftrightarrow\forall v\in\dom(A_{0}):\langle Au,A_{0}v\rangle=-\langle u,v\rangle,\\
 & \Leftrightarrow Au\in\dom(A_{0}^{\ast}),\quad A_{0}^{\ast}Au=-u,\\
 & \Leftrightarrow u\in\ker(1-A^{2})
\end{align*}
and hence, $\dom(A)=\dom(A_{0})\oplus_{A}\ker(1-A^{2}).$ Moreover,
$\ker(1-A^{2})=\ker(1-A)\oplus_{A}\ker(1+A),$ since for $u\in\ker(1-A^{2})$
we infer that 
\[
u=\frac{1}{2}(u+Au)+\frac{1}{2}(u-Au)\in\ker(1-A)+\ker(1+A)
\]
and for $u\in\ker(1-A),v\in\ker(1+A)$ we compute 
\[
\langle u,v\rangle_{A}=\langle Au,Av\rangle+\langle u,v\rangle=\langle u,-v\rangle+\langle u,v\rangle=0.
\]
We denote the orthogonal projector onto $\dom(A_{0})$ by $\pi_{0}$.
Hence, we have for $u\in\dom(A)$
\[
u=\pi_{0}u+\pi_{1}u+\pi_{-1}u\in\dom(A_{0})+\ker(1-A)+\ker(1+A).
\]
It is clear, that $(\pi_{1},\pi_{-1})\colon\dom(A)\to\ker(1-A)\times\ker(1+A)$
is onto and for $v,u\in\dom(A)$ we compute 
\begin{align*}
\langle Au,v\rangle+\langle u,Av\rangle & =\langle A_{0}\pi_{0}u,v\rangle+\langle\pi_{1}u,v\rangle-\langle\pi_{-1}u,v\rangle+\langle u,Av\rangle\\
 & =\langle\pi_{0}u,-Av\rangle+\langle\pi_{1}u,v\rangle-\langle\pi_{-1}u,v\rangle+\langle u,Av\rangle\\
 & =\langle\pi_{1}u,v\rangle-\langle\pi_{-1}u,v\rangle+\langle\pi_{1}u,Av\rangle+\langle\pi_{-1}u,Av\rangle\\
 & =\langle\pi_{1}u,v\rangle-\langle\pi_{-1}u,v\rangle+\langle\pi_{1}u,A_{0}\pi_{0}v\rangle+\langle\pi_{1}u,\pi_{1}v\rangle-\\
 & \quad-\langle\pi_{1}u,\pi_{-1}v\rangle+\langle\pi_{-1}u,A_{0}\pi_{0}v\rangle+\langle\pi_{-1}u,\pi_{1}v\rangle-\langle\pi_{-1}u,\pi_{-1}v\rangle\\
 & =\langle\pi_{1}u,v\rangle-\langle\pi_{-1}u,v\rangle-\langle\pi_{1}u,\pi_{0}v\rangle+\langle\pi_{1}u,\pi_{1}v\rangle-\\
 & \quad-\langle\pi_{1}u,\pi_{-1}v\rangle+\langle\pi_{-1}u,\pi_{0}v\rangle+\langle\pi_{-1}u,\pi_{1}v\rangle-\langle\pi_{-1}u,\pi_{-1}v\rangle\\
 & =2\langle\pi_{1}u,\pi_{1}v\rangle-2\langle\pi_{-1}u,\pi_{-1}v\rangle\\
 & =\langle\pi_{1}u,\pi_{1}v\rangle_{A}-\langle\pi_{-1}u,\pi_{-1}v\rangle_{A},
\end{align*}
which proves that $\left((\pi_{1},\pi_{-1}),\ker(1-A),\ker(1+A)\right)$
is a boundary system for $A$. 
\end{proof}
We now recall the main result of \cite{Picard_Trostorff_2023}.
\begin{thm}[{\cite[Theorem 3.1]{Picard_Trostorff_2023}}]
\label{thm:Rainer_main} Let $A_{0}\colon\dom(A_{0})\subseteq H\to H$
be a skew-symmetric closed linear operator and set $A\coloneqq-A_{0}^{\ast}$
and consider the boundary system $((\pi_{1},\pi_{-1}),\ker(1-A),\ker(1+A))$
from \prettyref{lem:standard_bd_sys}. Moreover, let $B\subseteq A$
be any (possibly nonlinear) restriction. Then $B$ is m-accretive
if and only if there exists a mapping 
\[
h\colon\ker(1-A)\to\ker(1+A)
\]
with 
\[
\forall x,y\in\ker(1-A):\,\|h(x)-h(y)\|_{A}\leq\|x-y\|_{A}
\]
such that 
\[
\dom(B)=\{u\in\dom(A)\,;\,h(\pi_{1}u)=\pi_{-1}u\}.
\]
\end{thm}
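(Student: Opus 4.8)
The plan is to translate accretivity of a restriction $B\subseteq A$ into a contraction condition on its boundary data and then read off $h$. Since $B$ is a restriction of the \emph{linear} operator $A$, we have $Bu=Au$ on $\dom(B)$, and for $u,v\in\dom(B)$ the defining identity of the boundary system, applied to $w=u-v$ (and using that the inner product is real, so $\langle Aw,w\rangle=\tfrac12(\langle Aw,w\rangle+\langle w,Aw\rangle)$), yields
\[
\langle Bu-Bv,u-v\rangle=\tfrac12\left(\|\pi_1 u-\pi_1 v\|_A^2-\|\pi_{-1}u-\pi_{-1}v\|_A^2\right),
\]
where linearity of $\pi_1,\pi_{-1}$ is used. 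Hence $B$ is accretive if and only if $\|\pi_{-1}u-\pi_{-1}v\|_A\le\|\pi_1 u-\pi_1 v\|_A$ for all $u,v\in\dom(B)$; call this condition $(\ast)$.

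For the direction ``$h$ exists $\Rightarrow$ $B$ m-accretive'', accretivity is immediate from $(\ast)$: on $\dom(B)=\{u: h(\pi_1u)=\pi_{-1}u\}$ the left-hand side equals $\|h(\pi_1u)-h(\pi_1v)\|_A\le\|\pi_1u-\pi_1v\|_A$ by the contraction property of $h$. For surjectivity of $1+B$ I would solve $u+Au=f$ explicitly in the $\langle\cdot,\cdot\rangle_A$-orthogonal decomposition $\dom(A)=\dom(A_0)\oplus_A\ker(1-A)\oplus_A\ker(1+A)$ from \prettyref{lem:standard_bd_sys}. Writing $u=\pi_0u+\pi_1u+\pi_{-1}u$ and using $A_0\subseteq A$, $Au=u$ on $\ker(1-A)$ and $Au=-u$ on $\ker(1+A)$, one finds $u+Au=(1+A_0)\pi_0u+2\pi_1u$; crucially the $\ker(1+A)$-component drops out. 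Since $A_0$ is skew-symmetric and closed, $1+A_0$ is bounded below, hence a bijection onto the closed range $\ran(1+A_0)=\ker(1-A)^{\perp}$ (in $H$). Decomposing $f=f_r+f_k$ along $H=\ran(1+A_0)\oplus\ker(1-A)$ and setting $\pi_0u=(1+A_0)^{-1}f_r$, $\pi_1u=\tfrac12 f_k$, and finally $\pi_{-1}u=h(\pi_1u)$ produces a solution $u\in\dom(B)$ of $u+Au=f$, so $1+B$ is onto and $B$ is m-accretive.

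For the converse, assume $B\subseteq A$ is m-accretive. Condition $(\ast)$ shows that $\pi_1u=\pi_1v$ forces $\pi_{-1}u=\pi_{-1}v$, so $h_0\colon\pi_1(\dom(B))\to\ker(1+A)$, $h_0(\pi_1u)\coloneqq\pi_{-1}u$, is a well-defined contraction for $\|\cdot\|_A$. By Kirszbraun's extension theorem, $h_0$ extends to a contraction $h\colon\ker(1-A)\to\ker(1+A)$. By construction $\dom(B)\subseteq\{u: h(\pi_1u)=\pi_{-1}u\}=\dom(\tilde B)$, where $\tilde B\coloneqq A|_{\dom(\tilde B)}$; by the already-proved direction $\tilde B$ is m-accretive, in particular accretive, hence an accretive extension of the m-accretive operator $B$. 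By \prettyref{rem:m-accretive}(a) this forces $\tilde B=B$, i.e. $\dom(B)=\{u: h(\pi_1u)=\pi_{-1}u\}$, which is the claim.

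The main obstacle is the converse direction, and specifically the extension step: accretivity only yields the contraction $h_0$ on the (possibly proper) subset $\pi_1(\dom(B))$ of $\ker(1-A)$, and in the nonlinear setting there is no linear-algebra device to extend it. Invoking Kirszbraun's theorem---available precisely because both $\ker(1\mp A)$ are Hilbert spaces---is what makes the nonlinear argument go through, and combining it with the uniqueness of accretive extensions of m-accretive operators (\prettyref{rem:m-accretive}(a)) is what upgrades the inclusion $\dom(B)\subseteq\dom(\tilde B)$ to equality. A secondary point needing care is checking that $\ran(1+A_0)=\ker(1-A)^{\perp}$ is closed, so that the explicit inversion in the surjectivity step is legitimate.
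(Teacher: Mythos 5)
Your argument is correct, but note that the paper itself gives no proof of this statement: it is imported verbatim as \cite[Theorem 3.1]{Picard_Trostorff_2023}, and the paper's Section 3 then uses it as a black box (together with Kirszbraun's theorem and \prettyref{rem:m-accretive}(a)) to derive the general result for arbitrary boundary systems. Your proof essentially reconstructs, for the special boundary system $((\pi_1,\pi_{-1}),\ker(1-A),\ker(1+A))$, the same reduction machinery the paper deploys at the level of general boundary systems --- the identity $2\langle A(u-v),u-v\rangle=\|\pi_1u-\pi_1v\|_A^2-\|\pi_{-1}u-\pi_{-1}v\|_A^2$, Kirszbraun extension of the boundary contraction, and the maximality argument $B\subseteq\tilde B\Rightarrow B=\tilde B$ --- so that part is parallel to \prettyref{lem:accretive}, \prettyref{cor:m-accretive_yields_contraction} and the proof of \prettyref{thm:main}. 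The genuinely new ingredient, which the paper outsources to the cited reference, is your surjectivity argument: the observation that in the $\langle\cdot,\cdot\rangle_A$-orthogonal decomposition $\dom(A)=\dom(A_0)\oplus_A\ker(1-A)\oplus_A\ker(1+A)$ one has $u+Au=(1+A_0)\pi_0u+2\pi_1u$, so the $\ker(1+A)$-component is free and can be prescribed as $h(\pi_1u)$, while $f$ is recovered from the $H$-orthogonal splitting $H=\ran(1+A_0)\oplus\ker(1-A)$ (legitimate since $\|(1+A_0)x\|^2=\|x\|^2+\|A_0x\|^2$ plus closedness of $A_0$ gives closed range, and $\ran(1+A_0)^{\perp}=\ker(1+A_0^{\ast})=\ker(1-A)$). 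This makes the whole theorem self-contained with no fixed-point argument, at the cost of being tied to this particular boundary system; the paper's strategy buys the statement for every boundary system at once, but only by citing the present theorem as input. I see no gap in your argument.
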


\begin{rem}
In \cite[Theorem 3.1]{Picard_Trostorff_2023} the Lipschitz estimate
for $h$ is stated with respect to the norm in $H$. This however
is equivalent to the above estimate, since on $\ker(1-A)$ and $\ker(1+A)$
both norms just differ by the constant $\sqrt{2}.$ 
\end{rem}

\section{Main result}

Throughout, let $A_{0}\colon\dom(A_{0})\subseteq H\to H$ be a skew-symmetric
closed linear operator on a Hilbert space $H$ and $A\coloneqq-A_{0}^{\ast}.$
We will prove the following theorem, which generalises \prettyref{thm:Rainer_main}
to arbitrary boundary systems. 
\begin{thm}
\label{thm:main}Let $(F,G_{1},G_{2})$ be a boundary system for $A$.
Then $B\subseteq A$ is m-accretive if and only if there exists a
mapping 
\[
g\colon G_{1}\to G_{2}
\]
with
\[
\forall a,b\in G_{1}:\,\|g(a)-g(b)\|_{G_{2}}\leq\|a-b\|_{G_{1}}
\]
such that 
\[
\dom(B)=\{u\in\dom(A)\,;\,g(F_{1}u)=F_{2}u\}.
\]
\end{thm}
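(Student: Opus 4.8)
The plan is to reduce the arbitrary boundary system to the standard one of \prettyref{lem:standard_bd_sys} by a single linear isomorphism between the boundary spaces. Write $P\coloneqq(\pi_{1},\pi_{-1})\colon\dom(A)\to\ker(1-A)\times\ker(1+A)$. Applying \prettyref{lem:bd_systems_easy} to both boundary systems, $F$ and $P$ are bounded, linear, onto and have the common kernel $\dom(A_{0})$. Hence $T(Pu)\coloneqq Fu$ is well defined and yields a linear bijection $T\colon\ker(1-A)\times\ker(1+A)\to G_{1}\times G_{2}$ with $F=TP$. Comparing the two boundary-system identities, for all $u,v\in\dom(A)$,
\[
\langle F_{1}u,F_{1}v\rangle_{G_{1}}-\langle F_{2}u,F_{2}v\rangle_{G_{2}}=\langle Au,v\rangle+\langle u,Av\rangle=\langle\pi_{1}u,\pi_{1}v\rangle_{A}-\langle\pi_{-1}u,\pi_{-1}v\rangle_{A}.
\]
Since $P$ is onto, this says exactly that $T$ is an isometry for the indefinite inner products $\langle a,a'\rangle_{A}-\langle b,b'\rangle_{A}$ and $\langle c,c'\rangle_{G_{1}}-\langle d,d'\rangle_{G_{2}}$; substituting $\xi=T^{-1}\eta$ shows the same for $T^{-1}$.

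The key lemma I would isolate is a purely Hilbert-space statement. Call a subset $\Gamma$ of a product $X\times Y$ of Hilbert spaces \emph{expansive} if $\|x-x'\|_{X}\geq\|y-y'\|_{Y}$ for all $(x,y),(x',y')\in\Gamma$. I claim that $\Gamma$ is maximal expansive (with respect to inclusion) if and only if it is the graph $\{(x,\varphi(x))\,;\,x\in X\}$ of a $1$-Lipschitz map $\varphi\colon X\to Y$. Expansiveness first forces $\Gamma$ to be the graph of a $1$-Lipschitz map on the projection $\mathrm{pr}_{X}(\Gamma)\subseteq X$ (equal first coordinates force equal second coordinates). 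Such a map extends to a $1$-Lipschitz map on all of $X$ by Kirszbraun's extension theorem, so maximality forces $\mathrm{pr}_{X}(\Gamma)=X$. Conversely, the graph of a total $1$-Lipschitz map $\varphi$ admits no proper expansive extension, since adjoining a point $(x_{0},y_{0})$ with $y_{0}\neq\varphi(x_{0})$ would require $0=\|x_{0}-x_{0}\|_{X}\geq\|y_{0}-\varphi(x_{0})\|_{Y}$, a contradiction.

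Because $T$ and $T^{-1}$ are bijections preserving the indefinite inner products, they send expansive sets to expansive sets and preserve inclusions, hence carry maximal expansive sets to maximal expansive sets. Both implications then follow. If $B\subseteq A$ is m-accretive, \prettyref{thm:Rainer_main} provides a contraction $h$ with $\dom(B)=\{u\,;\,Pu\in\Gamma_{h}\}$, where $\Gamma_{h}$ is the maximal expansive graph of $h$ (w.r.t. $\|\cdot\|_{A}$); then $T(\Gamma_{h})$ is maximal expansive in $G_{1}\times G_{2}$, hence the graph $\Gamma_{g}$ of a contraction $g\colon G_{1}\to G_{2}$, and
\[
\dom(B)=\{u\,;\,Pu\in\Gamma_{h}\}=\{u\,;\,Fu=T(Pu)\in\Gamma_{g}\}=\{u\,;\,g(F_{1}u)=F_{2}u\}.
\]
Conversely, given such a $g$, the graph $\Gamma_{g}$ is maximal expansive, so $T^{-1}(\Gamma_{g})=\Gamma_{h}$ for a contraction $h$, giving $\dom(B)=\{u\,;\,h(\pi_{1}u)=\pi_{-1}u\}$, and \prettyref{thm:Rainer_main} yields m-accretivity.

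The existence and the indefinite-isometry property of $T$ are algebraic bookkeeping, and the accretivity direction comes for free from \prettyref{thm:Rainer_main}. The one genuinely non-formal ingredient — and the step I expect to be the crux — is showing that the transported graph projects \emph{onto} all of $G_{1}$, i.e. that $g$ is defined everywhere. This is precisely where Kirszbraun's theorem (equivalently, the maximality inherent in m-accretivity) is unavoidable.
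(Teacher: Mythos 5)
Your proof is correct. It uses the same two external ingredients as the paper---\prettyref{thm:Rainer_main} and Kirszbraun's theorem (\prettyref{thm:Kirszbraun})---but organises them along a genuinely different route. The paper first characterises \emph{accretivity} of $B\subseteq A$ by the inclusion $\dom(B)\subseteq\{u\,;\,g(F_{1}u)=F_{2}u\}$ for some contraction $g$ (\prettyref{lem:accretive}, where Kirszbraun extends the partial contraction defined on $F_{1}[\dom(B)]$), upgrades the inclusion to an equality using that m-accretive operators admit no proper accretive extensions (\prettyref{cor:m-accretive_yields_contraction}), and in the converse direction compares the contraction $\tilde{g}$ obtained for the m-accretive enlargement $\tilde{B}$ with the given $g$ via the surjectivity of $F$. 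You instead build the linear transfer operator $T$ with $F=TP$ (well defined and bijective because $F$ and $P$ are onto with common kernel $\dom(A_{0})$, by \prettyref{lem:bd_systems_easy}), note that it is an isometry for the two indefinite forms, and reduce everything to a clean self-contained lemma: maximal expansive subsets of a product of Hilbert spaces are precisely graphs of everywhere-defined contractions, with Kirszbraun entering only there. Your route makes explicit the reusable structural fact that any two boundary systems for the same $A$ differ by a linear indefinite-inner-product isometry of the boundary spaces; the paper's route avoids constructing $T$ and instead invokes operator-level maximality. Both arguments locate the crux at the same place, namely that the transported graph projects onto all of $G_{1}$, i.e.\ that $g$ is total, exactly as you note.
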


We will prove this theorem by using \prettyref{thm:Rainer_main} and
Kirszbraun's surprising theorem on Lipschitz-continuous extensions
(see e.g. \cite[1.31. Theorem]{Schwartz1969}). 
\begin{thm}[Kirszbraun, \cite{Kirszbraun_1934}]
 \label{thm:Kirszbraun}Let $H_{0},H_{1}$ be two Hilbert spaces
and $f\colon D\subseteq H_{0}\to H_{1}$ be a Lipschitz-continuous
mapping defined on some subset $D\subseteq H_{0}.$ Then there exists
a Lipschitz-continuous mapping $g\colon H_{0}\to H_{1}$ with $g|_{D}=f$
and $|g|_{\mathrm{Lip}}=|f|_{\mathrm{Lip}}$.
\end{thm}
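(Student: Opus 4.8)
The plan is to reduce to the case $|f|_{\mathrm{Lip}}=1$ by rescaling (replace $f$ with $f/|f|_{\mathrm{Lip}}$ when $f$ is nonconstant; if $|f|_{\mathrm{Lip}}=0$ extend $f$ by its constant value, which is trivial), so that the task becomes extending a $1$-Lipschitz map to a $1$-Lipschitz map defined on all of $H_0$. First I would apply Zorn's lemma to the set of all $1$-Lipschitz extensions of $f$, ordered by inclusion of graphs; the union of a chain is again a $1$-Lipschitz extension, so a maximal extension $g\colon E\to H_1$ with $D\subseteq E$ exists. It then suffices to show $E=H_0$, and I argue by contradiction: if $x_0\in H_0\setminus E$, I must produce $y\in H_1$ with $\|y-g(x)\|\le\|x_0-x\|$ for all $x\in E$, since then $g$ extends $1$-Lipschitzly to $E\cup\{x_0\}$, contradicting maximality. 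Equivalently, I need
\[
\bigcap_{x\in E}\overline{B}\bigl(g(x),\|x_0-x\|\bigr)\neq\emptyset,
\]
where $\overline{B}(c,r)$ denotes the closed ball.

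The nonemptiness of this intersection rests on a compactness step together with a finite geometric lemma. For compactness I fix one $x_\ast\in E$; each ball is closed and convex, hence weakly closed, and intersecting every ball with the fixed weakly compact ball $\overline{B}(g(x_\ast),\|x_0-x_\ast\|)$ reduces the problem to a family of weakly closed subsets of a weakly compact set. By weak compactness the total intersection is nonempty as soon as the family has the finite intersection property. That finite intersection property is the heart of the matter: for any $x_1,\dots,x_n\in E$ I must find $y$ with $\|y-g(x_i)\|\le\|x_0-x_i\|$ for all $i$.

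This finite step is Kirszbraun's lemma, and it is exactly where the Hilbert space structure is indispensable. Writing $d_i\coloneqq\|x_0-x_i\|$ and $y_i\coloneqq g(x_i)$ (so $\|y_i-y_j\|\le\|x_i-x_j\|$), I would minimise the convex, coercive function $z\mapsto\max_i\|z-y_i\|/d_i$, denote by $y$ a minimiser and by $\rho$ the minimal value, and show $\rho\le1$. The optimality condition places $y$ in the convex hull of the active $y_i$ (those with $\|y-y_i\|=\rho d_i$), i.e.\ $y=\sum_i\lambda_i y_i$ with $\lambda_i\ge0$, $\sum_i\lambda_i=1$ supported on the active set. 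Expanding $\|\sum_i\lambda_i(y_i-y)\|^2=0$ through the polarisation $\langle y_i-y,y_j-y\rangle=\tfrac12(\|y_i-y\|^2+\|y_j-y\|^2-\|y_i-y_j\|^2)$ gives
\[
\rho^2\sum_i\lambda_i\|x_0-x_i\|^2=\sum_i\lambda_i\|y-y_i\|^2=\tfrac12\sum_{i,j}\lambda_i\lambda_j\|y_i-y_j\|^2\le\tfrac12\sum_{i,j}\lambda_i\lambda_j\|x_i-x_j\|^2.
\]
On the other side, the variance decomposition with $\bar x\coloneqq\sum_i\lambda_i x_i$ yields $\sum_i\lambda_i\|x_0-x_i\|^2=\sum_i\lambda_i\|x_i-\bar x\|^2+\|x_0-\bar x\|^2\ge\tfrac12\sum_{i,j}\lambda_i\lambda_j\|x_i-x_j\|^2$, so the two displays together force $\rho^2\le1$ (the degenerate case $\sum_i\lambda_i d_i^2=0$ forces all active $x_i=x_0$ and $\rho=0$).

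The main obstacle is precisely this finite lemma: the two Hilbertian identities, the inner-product polarisation and the parallel-axis (variance) decomposition, are what allow the same weights $\lambda_i$ to transport the estimate from $H_1$ back to $H_0$, and this is the step that genuinely fails in general Banach spaces. Everything else, namely the rescaling, the Zorn argument, and the weak-compactness passage from finite to arbitrary intersections, is routine once the lemma is in place. Finally, $|g|_{\mathrm{Lip}}=|f|_{\mathrm{Lip}}$ holds because the construction keeps the Lipschitz constant equal to $1$ throughout while $g\supseteq f$ forces $|g|_{\mathrm{Lip}}\ge|f|_{\mathrm{Lip}}$, and undoing the initial rescaling restores the original constant.
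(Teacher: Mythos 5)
Your proof is correct. Note that the paper itself offers no proof of this statement: it is quoted as Kirszbraun's theorem with references to \cite{Kirszbraun_1934} and \cite[1.31. Theorem]{Schwartz1969}, so there is no in-paper argument to compare against; what you have written is in essence the classical proof found in those sources (rescaling to $|f|_{\mathrm{Lip}}=1$, Zorn's lemma for a maximal $1$-Lipschitz extension, reduction of the one-point extension to the nonemptiness of $\bigcap_{x\in E}\overline{B}(g(x),\|x_0-x\|)$, weak compactness plus the finite intersection property, and the finite geometric lemma). The finite lemma, which you rightly identify as the only genuinely Hilbertian step, is carried out correctly: polarisation applied to $0=\|\sum_i\lambda_i(y_i-y)\|^2$ together with the parallel-axis identity around $\bar x=\sum_i\lambda_i x_i$ does force $\rho\le1$. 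Three small points you should make explicit in a written-up version. First, if some $x_i=x_0$ (i.e.\ $d_i=0$), the function $z\mapsto\max_i\|z-y_i\|/d_i$ is undefined; dispose of this case beforehand by taking $y=y_i$, which works since $\|y_i-y_j\|\le\|x_i-x_j\|=d_j$; after this reduction all $d_i>0$, so $\sum_i\lambda_i d_i^2>0$ automatically (as $\sum_i\lambda_i=1$) and your degenerate case never occurs where you placed it. Second, the existence of the minimiser deserves a word: restrict the minimisation to the compact convex hull of $y_1,\dots,y_n$, onto which the metric projection is nonexpansive and fixes each $y_i$, so the infimum is attained there. Third, the optimality condition that a minimiser lies in the convex hull of the \emph{active} $y_i$ is asserted but not argued; it follows from separation: if $y$ were outside that hull, there would exist $v$ with $\langle y_i-y,v\rangle>0$ for every active $i$, and moving $y$ slightly along $v$ strictly decreases all active ratios while keeping the inactive ones below $\rho$, contradicting minimality. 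With these routine insertions the argument is complete, and the final bookkeeping ($|g|_{\mathrm{Lip}}\ge|f|_{\mathrm{Lip}}$ from $g\supseteq f$, equality after undoing the rescaling) is fine as stated.
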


We start with a simple observation.
\begin{lem}
\label{lem:accretive}Let $(F,G_{1},G_{2})$ be a boundary system
for $A$ and let $B\subseteq A$. Then $B$ is accretive, if and only
if there exists a contraction $g\colon G_{1}\to G_{2}$ with 
\[
\dom(B)\subseteq\{u\in\dom(A)\,;\,g(F_{1}u)=F_{2}u\}.
\]
\end{lem}

\begin{proof}
If $B$ is accretive, we consider 
\[
f\coloneqq\{(F_{1}u,F_{2}u)\in G_{1}\times G_{2}\,;\,u\in\dom(B)\}.
\]
If $u,v\in\dom(B)$ then 
\[
\|F_{1}(u-v)\|_{G_{1}}^{2}-\|F_{2}(u-v)\|_{G_{2}}^{2}=2\langle A(u-v),u-v\rangle=2\langle B(u)-B(v),u-v\rangle\geq0
\]
and thus, 
\[
\|F_{2}(u)-F_{2}(v)\|_{G_{2}}^{2}\leq\|F_{1}(u)-F_{1}(v)\|_{G_{1}}^{2}.
\]
This shows, that $f$ is a contractive function defined on $D\coloneqq F_{1}[\dom(B)]$.
If we now apply \prettyref{thm:Kirszbraun}, we can extend $f$ to
a contraction $g\colon G_{1}\to G_{2}$ and clearly 
\[
\dom(B)\subseteq\{u\in\dom(A)\,;\,g(F_{1}u)=F_{2}u\}.
\]
On the other hand, if $\dom(B)\subseteq\{u\in\dom(A)\,;\,g(F_{1}u)=F_{2}u\}$
we estimate for each $u,v\in\dom(B)$
\begin{align*}
\langle B(u)-B(v),u-v\rangle & =\langle A(u-v),u-v\rangle\\
 & =\frac{1}{2}\left(\|F_{1}(u-v)\|_{G_{1}}^{2}-\|F_{2}(u-v)\|_{G_{2}}^{2}\right)\\
 & =\frac{1}{2}\left(\|F_{1}(u)-F_{1}(v)\|_{G_{1}}^{2}-\|g(F_{1}u)-g(F_{1}v)\|_{G_{2}}^{2}\right)\geq0.\tag*{\qedhere}
\end{align*}
\end{proof}
\begin{cor}
\label{cor:m-accretive_yields_contraction}Let $(F,G_{1},G_{2})$
be a boundary system for $A$ and let $B\subseteq A$ be m-accretive.
Then there exists a contraction $g\colon G_{1}\to G_{2}$ with 
\[
\dom(B)=\{u\in\dom(A)\,;\,g(F_{1}u)=F_{2}u\}.
\]
\end{cor}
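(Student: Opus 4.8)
The goal is Corollary~\ref{cor:m-accretive_yields_contraction}: for an m-accretive $B\subseteq A$ and any boundary system $(F,G_1,G_2)$, produce a contraction $g\colon G_1\to G_2$ whose graph over the boundary data \emph{exactly} cuts out $\dom(B)$. Lemma~\ref{lem:accretive} already gives half of this for free: since m-accretive operators are in particular accretive, there is a contraction $g\colon G_1\to G_2$ with $\dom(B)\subseteq\{u\in\dom(A)\,;\,g(F_1u)=F_2u\}$. So the entire content of the corollary is upgrading this inclusion to an equality.

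The plan is to argue by maximality. Define the candidate restriction
\[
\tilde B\coloneqq A|_{D},\qquad D\coloneqq\{u\in\dom(A)\,;\,g(F_1u)=F_2u\},
\]
using the very same contraction $g$ supplied by Lemma~\ref{lem:accretive}. By construction $B\subseteq\tilde B$, and by the converse direction of Lemma~\ref{lem:accretive} (the computation showing accretivity from the graph condition) the operator $\tilde B$ is again accretive. Now invoke Remark~\ref{rem:m-accretive}(a): an m-accretive operator admits no proper accretive extension. Since $\tilde B$ is an accretive extension of the m-accretive operator $B$, we must have $\tilde B=B$, hence $\dom(B)=D$, which is exactly the desired equality.

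The one point to verify carefully is that the containment direction of Lemma~\ref{lem:accretive} indeed applies to $\tilde B$ rather than only to $B$: its hypothesis is precisely that the domain be contained in $\{u\in\dom(A)\,;\,g(F_1u)=F_2u\}$ with $g$ a contraction, and here $\dom(\tilde B)=D$ equals that set, so the inclusion holds trivially and the accretivity computation (identical to the one in the proof of Lemma~\ref{lem:accretive}) goes through verbatim. I do not anticipate a genuine obstacle; the only subtlety is remembering that Remark~\ref{rem:m-accretive}(a) requires $B$ to be m-accretive and $\tilde B$ merely accretive, which is exactly the configuration we have arranged. Thus the corollary follows immediately by combining Lemma~\ref{lem:accretive} with the maximality of m-accretive operators recorded in Remark~\ref{rem:m-accretive}(a).
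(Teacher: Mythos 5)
Your argument is correct and is essentially identical to the paper's proof: both obtain the contraction $g$ from Lemma \ref{lem:accretive}, define $\tilde B\subseteq A$ on the set cut out by $g$, note that $\tilde B$ is accretive by the converse direction of that lemma, and conclude $B=\tilde B$ from the maximality recorded in Remark \ref{rem:m-accretive}(a). No gaps.
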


\begin{proof}
By \prettyref{lem:accretive} we find a contraction $g\colon G_{1}\to G_{2}$
with 
\[
\dom(B)\subseteq\{u\in\dom(A)\,;\,g(F_{1}u)=F_{2}u\}.
\]
If we now set $\tilde{B}\subseteq A$ with $\dom(\tilde{B})=\{u\in\dom(A)\,;\,g(F_{1}u)=F_{2}u\},$
then $\tilde{B}$ is accretive again by \prettyref{lem:accretive}
with $B\subseteq\tilde{B}.$ By maximality, we infer that $B=\tilde{B}$
(cp. \prettyref{rem:m-accretive} (a)). 
\end{proof}
\begin{rem}
The latter corollary can also be proved without the help of \prettyref{lem:accretive}
and hence, without Kirszbraun's Theorem. Indeed, one can define the
function $g$ on $F_{1}[\dom(B)]$ as in the proof of \prettyref{lem:accretive}.
Since $B$ is m-accretive, $\overline{\dom}(B)$ is convex (see e.g.
\cite[Proposition 17.2.2]{STW2022}), and hence, so is $\overline{F_{1}[\dom(B)]}=\overline{F_{1}[\overline{\dom}(B)]}.$
Indeed, this follows from the continuity and linearity of $F_{1}$
(see \prettyref{lem:bd_systems_easy}). Now, $g$ can clearly be extended
to a contraction on $\overline{F_{1}[\dom(B)]}$ and if $P$ denotes
the projection onto $\overline{F_{1}[\dom(B)]},$ we can extend $g$
to the whole Hilbert space by taking $g\circ P$ (note that $P$ is
also a contraction). 
\end{rem}

We can now prove our main result.
\begin{proof}[Proof of \prettyref{thm:main}]
 If $B\subseteq A$ is m-accretive the assertion is shown in \prettyref{cor:m-accretive_yields_contraction}.
Conversely, assume that 
\[
\dom(B)=\{u\in\dom(A)\,;\,\,g(F_{1}u)=F_{2}u\}
\]
for some contraction $g\colon G_{1}\to G_{2}.$ Then by \prettyref{lem:accretive},
$B$ is accretive. Recall that
\[
\left((\pi_{1},\pi_{-1}),\ker(1-A),\ker(1+A)\right)
\]
 is a boundary system for $A$ by \prettyref{lem:standard_bd_sys}.
Hence, by \prettyref{lem:accretive} we find a Lipschitz-continuous
mapping $h\colon\ker(1-A)\to\ker(1+A)$ with 
\[
\dom(B)\subseteq\{u\in\dom(A)\,;\,h(\pi_{1}u)=\pi_{-1}u\}.
\]
We define $\tilde{B}\subseteq A$ with $\dom(\tilde{B})\coloneqq\{u\in\dom(A)\,;\,h(\pi_{1}u)=\pi_{-1}u\}.$
Then by \prettyref{thm:Rainer_main}, $\tilde{B}$ is m-accretive
with $B\subseteq\tilde{B}.$ Moreover, by \prettyref{cor:m-accretive_yields_contraction}
we find a contraction $\tilde{g}\colon G_{1}\to G_{2}$ with 
\[
\dom(\tilde{B})=\{u\in\dom(A)\,;\,\tilde{g}(F_{1}u)=F_{2}u\}.
\]
 To complete the proof, we show that $\tilde{g}=g.$ Let $a\in G_{1}$
and consider the point $(a,g(a))\in G_{1}\times G_{2}.$ Since $F$
is onto, there exists $u\in\dom(A)$ with $F_{1}u=a$ and $F_{2}u=g(a).$
This yields $u\in\dom(B)\subseteq\dom(\tilde{B})$ and hence, $\tilde{g}(a)=\tilde{g}(F_{1}u)=F_{2}u=g(a);$
that is, $\tilde{g}=g.$ 
\end{proof}
\begin{prop}
\label{prop:linear} Let $(F,G_{1},G_{2})$ be a boundary system for
$A$. Let $g\colon G_{1}\to G_{2}$ be a contraction and set $B\subseteq A$
with 
\[
\dom(B)\coloneqq\{u\in\dom(A)\,;\,g(F_{1}u)=F_{2}u\};
\]
that is, $B$ is an m-accretive restriction of $A$. Then

\begin{enumerate}[(a)]

\item $A_{0}\subseteq B$ if and only if $g(0)=0$,

\item $B$ is linear if and inly if $g$ is linear. In particular,
if $B$ is linear then $A_{0}\subseteq B$. 

\end{enumerate}
\end{prop}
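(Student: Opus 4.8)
The plan is to exploit two structural facts about the boundary system: that $F$ is linear and onto, and that $\ker F = \dom(A_0)$ by \prettyref{lem:bd_systems_easy}. I also use that $A$ genuinely extends $A_0$: since $A_0$ is skew-symmetric we have $A_0 \subseteq -A_0^{\ast} = A$, so $Au = A_0 u$ for every $u \in \dom(A_0)$. Consequently $A_0 \subseteq B$ is equivalent to the single inclusion $\dom(A_0) \subseteq \dom(B)$, because on $\dom(A_0)$ the operator $B$, being a restriction of $A$, automatically agrees with $A_0$. For part (a) I would then argue as follows. If $g(0)=0$, every $u \in \dom(A_0) = \ker F$ satisfies $F_1 u = F_2 u = 0$, hence $g(F_1 u) = g(0) = 0 = F_2 u$, so $u \in \dom(B)$; this gives $\dom(A_0) \subseteq \dom(B)$ and therefore $A_0 \subseteq B$. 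Conversely, if $A_0 \subseteq B$, then in particular $0 \in \dom(A_0) \subseteq \dom(B)$, and the defining relation of $\dom(B)$ evaluated at $u = 0$ (using $F_1 0 = F_2 0 = 0$) reads $g(0) = 0$.

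For part (b), the easy direction assumes $g$ linear: then $u \mapsto g(F_1 u) - F_2 u$ is linear as a combination of the linear maps $g, F_1, F_2$, so $\dom(B)$, being its kernel, is a linear subspace; since $B$ is the restriction of the linear operator $A$ to this subspace, its action is inherited from $A$ and $B$ is linear. Because a linear $g$ satisfies $g(0)=0$, part (a) immediately yields the ``in particular'' claim that a linear $B$ contains $A_0$. For the reverse direction I assume $\dom(B)$ is a subspace and must recover linearity of $g$. Given $a,b \in G_1$ and scalars $\alpha,\beta$, I would use the surjectivity of $F$ to choose $u, v \in \dom(A)$ with $Fu = (a, g(a))$ and $Fv = (b, g(b))$; these lie in $\dom(B)$ by construction, so $\alpha u + \beta v \in \dom(B)$, and feeding $\alpha u + \beta v$ into the boundary condition while using the linearity of $F_1, F_2$ gives $g(\alpha a + \beta b) = \alpha g(a) + \beta g(b)$.

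I expect the only nontrivial point to be this last step: the key idea is to realise arbitrary boundary data of the form $(a, g(a))$ as the image $Fu$ of some $u$ which is then automatically in $\dom(B)$, exactly what surjectivity of $F$ provides. Everything else is a direct unwinding of the definitions, together with the identifications $\ker F = \dom(A_0)$ and $A|_{\dom(A_0)} = A_0$, and the observation that $B$ inherits linearity of its action from $A$ once $\dom(B)$ is known to be a subspace.
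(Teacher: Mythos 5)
Your proposal is correct and follows essentially the same route as the paper: part (a) reduces to $\dom(A_0)=\ker F\subseteq\dom(B)$ being equivalent to $g(0)=0$, and part (b) uses surjectivity of $F$ to realise boundary data $(a,g(a))$, $(b,g(b))$ and then feeds a linear combination into the boundary condition. The only difference is that you spell out explicitly why $A_0\subseteq B$ reduces to the domain inclusion (via $A_0\subseteq A$), which the paper leaves implicit.
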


\begin{proof}
(a) We have $A_{0}\subseteq B$ if and only if $\dom(A_{0})\subseteq\dom(B).$
Since $\dom(A_{0})=\ker F$ by \prettyref{lem:bd_systems_easy}, the
latter is equivalent to $g(0)=0.$ \\
(b) Assume that $B$ is linear and let $a,b\in G_{1}$ as well as
$\lambda\in\R.$ Since $F$ is onto, we find $u,v\in\dom(A)$ such
that $F(u)=(a,g(a)),F(v)=(b,g(b)).$ Consequently, $u,v\in\dom(B)$
and clearly, 
\[
F(\lambda u+v)=(\lambda a+b,\lambda g(a)+g(b)).
\]
Since $B$ is linear, $\lambda u+v\in\dom(B)$ and thus, $g(\lambda a+b)=g(F_{1}(\lambda u+v))=F_{2}(\lambda u+v)=\lambda g(a)+g(b)$,
showing that $g$ is linear. If on the other hand $g$ is linear,
the linearity of $B$ follows trivially. The additional assertion
is a direct consequence of (a).
\end{proof}
\begin{cor}
\label{cor:extension_vs_restriction} Let $B\supseteq A_{0}$ be m-accretive
and linear. Then $B\subseteq A.$
\end{cor}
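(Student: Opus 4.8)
The plan is to unravel what $B\subseteq A=-A_{0}^{\ast}$ means at the level of inner products and then extract it from accretivity alone. By the definition of the adjoint, proving $B\subseteq-A_{0}^{\ast}$ amounts to showing that every $u\in\dom(B)$ lies in $\dom(A_{0}^{\ast})$ with $A_{0}^{\ast}u=-Bu$; that is,
\[
\forall u\in\dom(B)\,\forall v\in\dom(A_{0}):\ \langle u,A_{0}v\rangle=-\langle Bu,v\rangle.
\]
Since the inner product is real and symmetric, this is the same as
\[
\langle Bu,v\rangle+\langle A_{0}v,u\rangle=0\qquad(u\in\dom(B),\,v\in\dom(A_{0})).
\]
(Here $A_{0}$ is densely defined, as otherwise $A=-A_{0}^{\ast}$ would not be a single-valued operator, so $A_{0}^{\ast}$ is genuinely an operator and the reduction is legitimate.)

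The core idea is a one-parameter perturbation that exploits the vanishing of the skew-symmetric quadratic form. Fix $u\in\dom(B)$ and $v\in\dom(A_{0})$. Since $\dom(A_{0})\subseteq\dom(B)$ and $\dom(B)$ is a subspace, $u+tv\in\dom(B)$ for every $t\in\R$, and accretivity of the linear operator $B$ gives
\[
0\leq\langle B(u+tv),u+tv\rangle=\langle Bu,u\rangle+t\big(\langle Bu,v\rangle+\langle Bv,u\rangle\big)+t^{2}\langle Bv,v\rangle.
\]
Because $B$ extends $A_{0}$ we have $Bv=A_{0}v$, and skew-symmetry of $A_{0}$ forces $\langle Bv,v\rangle=\langle A_{0}v,v\rangle=0$. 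Hence the quadratic term drops out and the right-hand side is an affine function of $t\in\R$ that is nonnegative on all of $\R$; its slope must therefore vanish, i.e. $\langle Bu,v\rangle+\langle A_{0}v,u\rangle=0$. This is exactly the identity above, so $B\subseteq-A_{0}^{\ast}=A$.

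The only genuinely delicate point is the choice of the test combination $u+tv$ with $v$ ranging over $\dom(A_{0})$ rather than over all of $\dom(B)$: it is precisely this restriction that makes the leading coefficient $\langle A_{0}v,v\rangle$ vanish and collapses the quadratic estimate into a linear one, from which the slope-must-be-zero argument yields the adjoint identity. I expect no further obstacles; in particular, surjectivity of $1+B$ is never used, so the statement in fact holds for any linear accretive extension of $A_{0}$, with m-accretivity needed only to match the hypotheses as stated.
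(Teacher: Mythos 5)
Your proof is correct, but it takes a genuinely different and more elementary route than the paper's. The paper deduces this corollary from its main machinery: it passes to the adjoint $B^{\ast}$, argues that $B^{\ast}$ is again m-accretive (via the resolvent bound from Lemma \ref{lem:onto_m_accretive}), views it as a linear m-accretive restriction of $A$, and then invokes Theorem \ref{thm:main} together with Proposition \ref{prop:linear} (b) to conclude $A_{0}\subseteq B^{\ast}$, whence $B\subseteq A$ by dualising back. You instead work directly with the defining relation of the adjoint: for $u\in\dom(B)$ and $v\in\dom(A_{0})$ you expand $0\leq\langle B(u+tv),u+tv\rangle$, use $Bv=A_{0}v$ and $\langle A_{0}v,v\rangle=0$ to kill the quadratic term, and read off $\langle Bu,v\rangle+\langle A_{0}v,u\rangle=0$ because a nonnegative affine function of $t\in\R$ must have zero slope. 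This is essentially the classical argument behind \cite[Proposition 2.8]{Wegner2017}, which the paper itself cites for this fact in the introduction. Your route buys two things: it bypasses boundary systems, Kirszbraun's theorem and the duality step entirely, and it proves the strictly stronger statement that accretivity alone suffices (surjectivity of $1+B$ is never used). What the paper's detour buys is internal coherence: the corollary is exhibited as a formal consequence of the characterisation in Theorem \ref{thm:main}, illustrating its strength. Your parenthetical remark that $A_{0}$ must be densely defined for $A_{0}^{\ast}$ to be single-valued correctly identifies the one implicit hypothesis needed to make the reduction to the adjoint identity legitimate.
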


\begin{proof}
Let $(F,G_{1},G_{2})$ be an arbitrary boundary system for $A$ (such
a system always exists by \prettyref{lem:standard_bd_sys}). Since
$B\supseteq A_{0}$ is m-accretive, so is $B^{\ast}\subseteq-A_{0}^{\ast}=A$.
Indeed, since $B$ is m-accretive, we get $\R_{<0}\subseteq\rho(B)$
with $\|(\mu+B)^{-1}\|\leq\frac{1}{\mu}$ for each $\mu>0$ by \prettyref{lem:onto_m_accretive}.
Since this clearly carries over to $B^{\ast},$ we infer that $B^{\ast}$
is m-accretive as well again by \prettyref{lem:onto_m_accretive}.
Hence, by \prettyref{thm:main} and \prettyref{prop:linear} (b),
we infer that $A_{0}\subseteq B^{\ast}$, which in turn implies $B\subseteq A.$ 
\end{proof}

\section{Application to Port-Hamiltonian systems}

We apply our previous findings to port-Hamiltonian systems of arbitrary
order. We begin to recall the setting. Throughout let $a,b\in\R$
with $a<b$. Moreover, let $d,n\in\N$ and $P_{0},\ldots,P_{n}\in\R^{d\times d}$
with $P_{j}^{\top}=(-1)^{j+1}P_{j}$ and $\det P_{n}\ne0.$ Finally,
let $\mathcal{H}\in L_{\infty}(a,b;\R^{d\times d})$ be such that
$\mathcal{H}(x)$ is symmetric for almost every $x\in[a,b]$ and there
exists $c>0$ such that 
\[
\langle\mathcal{H}(x)v,v\rangle_{\R^{d}}\geq c\|v\|_{\R^{d}}^{2}\quad(v\in\R^{d},x\in[a,b]\text{ a.e.}).
\]
We consider the following operator 
\[
A\colon\dom(A)\subseteq H\to H,\quad Au=\sum_{k=0}^{n}P_{k}\partial^{k}(\mathcal{H}u),
\]
where $H=L_{2}(a,b)^{d}$ equipped with the inner product 
\[
\langle u,v\rangle_{H}\coloneqq\int_{a}^{b}\langle\mathcal{H}(x)u(x),v(x)\rangle\d x\quad(u,v\in L_{2}(a,b)^{d})
\]
and 
\[
\dom(A)\coloneqq\{u\in H\,;\,\mathcal{H}u\in H^{n}(a,b)^{d}\}.
\]
Note that the norm on $H$ is equivalent to the standard $L_{2}$-norm.
We will characterise all m-accretive restrictions of $A$ by using
the theory of boundary systems. First, we show that it suffices to
treat the case $\mathcal{H}(x)=1_{d\times d}$ for $x\in[a,b]$, see
also \cite[Lemma 7.2.3]{JacobZwart2012} or \cite[Proposition 3.1]{PTW2023}. 
\begin{lem}
\label{lem:congruence}Consider the operator 
\begin{align*}
S\colon H & \to L_{2}(a,b)^{n},\\
u & \mapsto\mathcal{H}u.
\end{align*}
Then the following statements hold:

\begin{enumerate}[(a)]

\item $S$ is a bijection and $S^{\ast}\colon L_{2}(a,b)^{n}\to H$
is given by $S^{\ast}v=v$ for $v\in L_{2}(a,b)^{n}.$ 

\item A restriction $B\subseteq A$ is m-accretive on $H$ if and
only if the operator $\tilde{B}\subseteq\sum_{k=0}^{n}P_{k}\partial^{k}$
with 
\[
\dom(\tilde{B})=\{u\in H^{n}(a,b)\,;\,\mathcal{H}^{-1}u\in\dom(B)\}
\]
 is m-accretive on $L_{2}(a,b)^{n}.$

\end{enumerate}
\end{lem}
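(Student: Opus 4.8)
The plan is to treat (a) by a direct computation and then reduce (b) to a change of variables through $S$, the only genuine work being a perturbation-by-a-coercive-operator argument for the surjectivity.

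For (a) I would argue as follows. Since $\mathcal{H}\in L_{\infty}(a,b;\R^{d\times d})$ with $\mathcal{H}(x)\geq c>0$ for a.e.\ $x$, the pointwise inverse $\mathcal{H}^{-1}$ again lies in $L_{\infty}$, and multiplication by it is a two-sided inverse of $S$; hence $S$ is a bounded bijection $H\to L_{2}(a,b)^{d}$. For the adjoint, note that $H$ and $L_{2}(a,b)^{d}$ coincide as sets and that, for $u\in H$ and $v\in L_{2}(a,b)^{d}$,
\[
\langle Su,v\rangle_{L_{2}}=\int_{a}^{b}\langle\mathcal{H}(x)u(x),v(x)\rangle\d x=\langle u,v\rangle_{H}.
\]
Comparing this with the defining identity $\langle Su,v\rangle_{L_{2}}=\langle u,S^{\ast}v\rangle_{H}$ and using non-degeneracy of $\langle\cdot,\cdot\rangle_{H}$ one reads off $S^{\ast}v=v$.

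For (b) I would write $L\coloneqq\sum_{k=0}^{n}P_{k}\partial^{k}$ with maximal domain $H^{n}(a,b)^{d}$, so that $A=L\,S$ and $\dom(A)=\{u:Su\in\dom(L)\}$. Given $B\subseteq A$, the operator of the statement is exactly $\tilde{B}=B\,S^{-1}$ with $\dom(\tilde{B})=S[\dom(B)]$: for $w=\mathcal{H}u$ with $u\in\dom(B)$ one has $\tilde{B}w=Lw=L\mathcal{H}u=Au=Bu$. As $S$ is bijective, $(u_{1},u_{2})\mapsto(Su_{1},Su_{2})$ is a bijection of $\dom(B)^{2}$ onto $\dom(\tilde{B})^{2}$, so it suffices to compare the accretivity inequalities term by term. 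Using $\tilde{B}(Su_{i})=Bu_{i}$ and the pointwise symmetry of $\mathcal{H}(x)$,
\[
\langle\tilde{B}(Su_{1})-\tilde{B}(Su_{2}),Su_{1}-Su_{2}\rangle_{L_{2}}=\int_{a}^{b}\langle Bu_{1}-Bu_{2},\mathcal{H}(u_{1}-u_{2})\rangle\d x=\langle Bu_{1}-Bu_{2},u_{1}-u_{2}\rangle_{H},
\]
so that $B$ is accretive on $H$ if and only if $\tilde{B}$ is accretive on $L_{2}(a,b)^{d}$.

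It remains to match the range conditions, and this is the step requiring more than bookkeeping. Writing $u=S^{-1}w$ one computes
\[
(1+\tilde{B})(Su)=Su+Bu=(\mathcal{H}+B)u,\qquad(1+B)(S^{-1}w)=\mathcal{H}^{-1}w+\tilde{B}w=(\mathcal{H}^{-1}+\tilde{B})w,
\]
so $1+\tilde{B}$ is onto $L_{2}(a,b)^{d}$ iff $\mathcal{H}+B$ is onto, and $1+B$ is onto $H$ iff $\mathcal{H}^{-1}+\tilde{B}$ is onto. The main obstacle is therefore the perturbation fact that if $C$ is m-accretive on a Hilbert space $K$ and $M\colon K\to K$ is bounded, self-adjoint and coercive (say $M\geq\gamma>0$), then $M+C$ is onto. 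I would prove this by the contraction argument already used in \prettyref{lem:onto_m_accretive}: for $f\in K$ the equation $(M+C)u=f$ is equivalent to $u=(\lambda+C)^{-1}(f+(\lambda-M)u)$, and for $\lambda\coloneqq\tfrac{1}{2}(\gamma+\|M\|)$ the right-hand side is a strict contraction, since $\|\lambda-M\|\leq\tfrac{1}{2}(\|M\|-\gamma)<\lambda$ while $|(\lambda+C)^{-1}|_{\mathrm{Lip}}\leq\lambda^{-1}$, so Banach's fixed point theorem yields a solution.

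Finally I would assemble both directions. Applying the perturbation fact with $K=H$ and $M=\mathcal{H}$ (which is bounded, self-adjoint and $\geq c$ on $H$, as $\langle\mathcal{H}u,u\rangle_{H}=\|\mathcal{H}u\|_{L_{2}}^{2}\geq c\|u\|_{H}^{2}$), m-accretivity of $B$ forces $\mathcal{H}+B$, hence $1+\tilde{B}$, to be onto; together with the accretivity equivalence and \prettyref{lem:onto_m_accretive}(ii) this gives that $\tilde{B}$ is m-accretive. The converse is symmetric, now with $K=L_{2}(a,b)^{d}$ and $M=\mathcal{H}^{-1}$ (bounded, self-adjoint, coercive): m-accretivity of $\tilde{B}$ forces $\mathcal{H}^{-1}+\tilde{B}$, hence $1+B$, to be onto, so $B$ is m-accretive. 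Everything beyond the perturbation fact is just the change of variables through the congruence $S$.
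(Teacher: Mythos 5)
Your proof is correct and follows essentially the same route as the paper: part (a) by the same direct computation, and part (b) by transporting accretivity through the congruence $B=S^{\ast}\tilde{B}S$ and obtaining surjectivity from a contraction-principle perturbation of an m-accretive operator by a bounded, self-adjoint, coercive multiplication operator. The only (cosmetic) difference is that you choose $\lambda=\tfrac12(\gamma+\|M\|)$ to get surjectivity of $M+C$ in one step, whereas the paper shifts by $\kappa$, takes $\lambda>\|T\|$ and then bootstraps back down via \prettyref{lem:onto_m_accretive}.
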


\begin{proof}
(a) From the strict positive definiteness of $\mathcal{H}$, we infer
that $\mathcal{H},\mathcal{H}^{-1}\in L_{\infty}(a,b;\R^{d\times d}).$
Hence, $S$ is clearly a bijection. Moreover, for $u\in H,v\in L_{2}(a,b)^{n}$
we compute 
\[
\langle Su,v\rangle_{L_{2}}=\langle\mathcal{H}u,v\rangle_{L_{2}}=\langle u,v\rangle_{H},
\]
which shows $S^{\ast}v=v.$ 

(b) Assume that $\tilde{B}$ is m-accretive. By (a) we have $S^{\ast}\tilde{B}S=B.$
Hence, $B$ is accretive. Moreover, note that $\left(S^{\ast}\right)^{-1}S^{-1}$
is an accretive selfadjoint and bijective operator. Then there exists
$\kappa>0$ such that 
\[
\langle\left(S^{\ast}\right)^{-1}S^{-1}u,u\rangle_{L_{2}}\geq\kappa\|u\|_{L_{2}}^{2}\quad(u\in L_{2}(a,b)^{n});
\]
that is, $T\coloneqq\left(S^{\ast}\right)^{-1}S^{-1}-\kappa$ is accretive.
Consider now the mapping $T+\tilde{B},$ which is clearly accretive.
Moreover, for $\lambda>\|T\|$ and $f\in L_{2}(a,b)^{n},$ we infer
that 
\[
u\mapsto(\lambda+\tilde{B})^{-1}\left(f-Tu\right)
\]
is a strict contraction (recall that $|(\lambda+\tilde{B})^{-1}|_{\mathrm{Lip}}\leq\frac{1}{\lambda}$
by \prettyref{lem:onto_m_accretive}) and hence, possesses a fixed
point $u\in L_{2}(a,b)^{n}.$ For this fixed point we have 
\[
\lambda u+Tu+\tilde{B}(u)=f
\]
and thus, $\lambda+T+\tilde{B}$ is onto. By \prettyref{lem:onto_m_accretive}
we infer that also $\left(S^{\ast}\right)^{-1}S^{-1}+\tilde{B}=\kappa+T+\tilde{B}$
is onto. Hence, for $g\in H$ we find $u\in L_{2}(a,b)^{n}$ such
that 
\[
\left(S^{\ast}\right)^{-1}S^{-1}u+\tilde{B}(u)=\left(S^{\ast}\right)^{-1}g
\]
and hence $v\coloneqq S^{-1}u$ satisfies 
\[
g=v+S^{\ast}\tilde{B}(Sv)=v+B(v),
\]
which shows that $B$ is m-accretive. The other implication follows
by arguing the same lines interchanging the roles of $B$ and $\tilde{B}$.
\end{proof}
So in the following, let $\mathcal{H}(x)=1_{d\times d}$ for $x\in[a,b]$
and $H=L_{2}(a,b)^{d}$ be equipped with the standard inner product.
We show that the operator 
\begin{equation}
A=\sum_{k=0}^{n}P_{k}\partial^{k}:H^{n}(a,b)^{d}\subseteq L_{2}(a,b)^{d}\to L_{2}(a,b)^{d}\label{eq:A}
\end{equation}
fits in the abstract framework of the previous sections. For doing
so, we define the operator 
\[
A_{0}\coloneqq\sum_{k=0}^{n}P_{k}\partial_{0}^{k}
\]
with domain $\dom(A_{0})\coloneqq H_{0}^{n}(a,b)^{d}=\{u\in H^{n}(a,b)^{d}\,;\,\forall k\in\{0,\ldots,n-1\}:u^{(k)}(a)=u^{(k)}(b)=0\}.$
We will show that this operator is indeed closed and skew-symmetric
and that its negative adjoint is precisely the operator $A$. Note
that since $\rho(\partial_{0})=\emptyset,$ it is not a priori clear
that $A_{0}$ is closed. We define 
\begin{align*}
I\colon L_{2}(a,b)^{d} & \to L_{2}(a,b)^{d}\\
f & \mapsto\left(t\mapsto\int_{a}^{t}f(s)\d s\right).
\end{align*}
Then it is easy to see that $I$ is bounded and that 
\[
I\partial_{0}f=f\quad(f\in H_{0}^{1}(a,b)^{d}).
\]
Since $\dom(A_{0})=H_{0}^{n}(a,b)^{d},$ we infer that 
\[
A_{0}=\sum_{k=0}^{n}P_{k}\partial_{0}^{k}=\sum_{k=0}^{n}P_{k}I^{n-k}\partial_{0}^{n}.
\]
We begin to show that $\partial_{0}^{n}$ is closed.
\begin{lem}
\label{lem:derivative_closed}For each $n\in\N$ the operator $\partial_{0}^{n}$
is closed and $C_{c}^{\infty}(a,b)^{d}$ is a core for $\partial_{0}^{n}.$
\end{lem}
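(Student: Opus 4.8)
The plan is to exploit the representation of functions in $H_{0}^{n}(a,b)^{d}$ through the bounded integration operator $I$. The starting point is that for $u\in\dom(\partial_{0}^{n})=H_{0}^{n}(a,b)^{d}$ the vanishing of $u^{(k)}(a)$ for $k\in\{0,\ldots,n-1\}$ yields, by iterating the fundamental theorem of calculus and the property $I\partial_{0}f=f$, the formulas $u^{(k)}=I^{n-k}u^{(n)}$ for each such $k$; in particular $u=I^{n}u^{(n)}$. Since $I$ is bounded on $L_{2}(a,b)^{d}$ and, by Cauchy--Schwarz, maps $L_{2}(a,b)^{d}$ continuously into $C[a,b]^{d}$, these formulas control all lower order derivatives by $u^{(n)}$.

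For closedness, let $(u_{m})_{m}$ in $H_{0}^{n}(a,b)^{d}$ with $u_{m}\to u$ and $u_{m}^{(n)}\to v$ in $L_{2}(a,b)^{d}$. Using $u_{m}=I^{n}u_{m}^{(n)}$ and the boundedness of $I^{n}$ gives $u_{m}\to I^{n}v$, hence $u=I^{n}v\in H^{n}(a,b)^{d}$ with $u^{(n)}=v$ and $u^{(k)}(a)=0$ for $k<n$. For the remaining boundary conditions at $b$, I would apply the continuity of $I\colon L_{2}(a,b)^{d}\to C[a,b]^{d}$ to the identities $u_{m}^{(k)}=I^{n-k}u_{m}^{(n)}$, which gives uniform convergence $u_{m}^{(k)}\to u^{(k)}$ on $[a,b]$; since $u_{m}^{(k)}(b)=0$, also $u^{(k)}(b)=0$. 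Thus $u\in H_{0}^{n}(a,b)^{d}$ and $\partial_{0}^{n}u=v$, so $\partial_{0}^{n}$ is closed.

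For the core statement, note first that on $H_{0}^{n}(a,b)^{d}$ the estimates $\|u^{(k)}\|\leq\|I\|^{n-k}\|u^{(n)}\|$ show the graph norm of $\partial_{0}^{n}$ to be equivalent to the $H^{n}$-norm; hence it suffices to approximate a given $u$ in the graph norm, i.e. to approximate $v\coloneqq u^{(n)}$ in $L_{2}$ by $n$-th derivatives of $C_{c}^{\infty}$ functions. A function $\psi\in C_{c}^{\infty}(a,b)^{d}$ is the $n$-th derivative of some $\phi\in C_{c}^{\infty}(a,b)^{d}$ precisely when it obeys the $nd$ moment conditions $\int_{a}^{b}s^{j}\psi(s)\d s=0$ for $j\in\{0,\ldots,n-1\}$ (componentwise): these are exactly the conditions forcing $(I^{n}\psi)^{(k)}(b)=0$ for $k<n$, which for compactly supported $\psi$ makes $\phi=I^{n}\psi$ vanish near both endpoints. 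Writing $V$ for the closed subspace of $L_{2}(a,b)^{d}$ cut out by these moment conditions, the constraint $u\in H_{0}^{n}(a,b)^{d}$ is equivalent to $v\in V$, and the task reduces to showing that $M\coloneqq C_{c}^{\infty}(a,b)^{d}\cap V$ is dense in $V$.

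This density of $M$ in $V$ is the step requiring the only real work, and it is where I expect the main obstacle. The plan is to fix functions $\chi_{1},\ldots,\chi_{nd}\in C_{c}^{\infty}(a,b)^{d}$ whose moment vectors form a basis of $\R^{nd}$ (possible because the moment map is a bounded surjection onto $\R^{nd}$ that remains onto when restricted to the dense subspace $C_{c}^{\infty}(a,b)^{d}$), and to build from them a bounded linear correction $Q$ on $L_{2}(a,b)^{d}$, valued in the linear hull $\lin\{\chi_{1},\ldots,\chi_{nd}\}$, having the same moments as its argument; then $w-Qw\in V$ for every $w$, and $w-Qw\in M$ whenever $w\in C_{c}^{\infty}(a,b)^{d}$. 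Given $v\in V$, choose $\psi_{m}\in C_{c}^{\infty}(a,b)^{d}$ with $\psi_{m}\to v$ in $L_{2}$ and set $\phi_{m}\coloneqq\psi_{m}-Q\psi_{m}\in M$; since $Qv=0$ for $v\in V$, continuity of $Q$ gives $\phi_{m}\to v$. Transporting back via $I^{n}$, the functions $I^{n}\phi_{m}\in C_{c}^{\infty}(a,b)^{d}$ converge to $u$ in the graph norm, proving that $C_{c}^{\infty}(a,b)^{d}$ is a core. The delicate point is precisely this moment-correction construction: guaranteeing that the $\chi_{l}$ can be chosen smooth and compactly supported with independent moments, and that $Q$ maps $C_{c}^{\infty}(a,b)^{d}$ into itself, so that the corrected approximants remain compactly supported and smooth while satisfying all constraints.
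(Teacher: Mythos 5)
Your argument is correct, but it follows a genuinely different route from the paper. The paper proceeds by induction on $n$: closedness of $\partial_{0}^{n+1}$ is reduced to closedness of $\partial_{0}^{n}$ and of $\partial_{0}$ via the identity $\partial_{0}^{n}f_{k}=I\partial_{0}^{n+1}f_{k}$, and the core property is propagated one order at a time by correcting $I\psi_{k}$ with the single cut-off term $\varphi\int_{a}^{b}\psi_{k}$, so that only one ``moment'' has to be killed per induction step. You instead work directly at the top order: the identity $u=I^{n}u^{(n)}$ together with the boundedness of $I$ from $L_{2}$ into $C[a,b]$ gives closedness in one stroke (including the boundary conditions at $b$ via uniform convergence of $u_{m}^{(k)}=I^{n-k}u_{m}^{(n)}$), and the core property is reduced to the density of $C_{c}^{\infty}(a,b)^{d}\cap V$ in the moment-constrained subspace $V=\partial_{0}^{n}[H_{0}^{n}(a,b)^{d}]$, which you obtain by a finite-rank moment-correction operator $Q$. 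The step you flag as delicate is in fact unproblematic: the moment map is a bounded surjection onto $\R^{nd}$, its restriction to the dense subspace $C_{c}^{\infty}(a,b)^{d}$ still has dense, hence full, image in $\R^{nd}$, so the $\chi_{l}$ exist, and $Q$ by construction takes values in their linear hull and therefore preserves $C_{c}^{\infty}(a,b)^{d}$. What your approach buys is a clean structural picture ($\partial_{0}^{n}$ is the inverse of $I^{n}$ restricted to $V$, and the graph norm is equivalent to the $H^{n}$-norm on $H_{0}^{n}$); what the paper's induction buys is brevity, since it never needs to identify the range of $\partial_{0}^{n}$ or handle more than one moment condition at a time.
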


\begin{proof}
The proof if done by induction. For $n=1$ the claim follows by definition
of $\partial_{0}.$ Let now $\partial_{0}^{n}$ be closed for some
$n\in\N$ and let $(f_{k})_{k\in\N}$ be a sequence in $\dom(\partial_{0}^{n+1})$
such that $f_{k}\to f$ and $\partial_{0}^{n+1}f_{k}\to g$ for some
$f,g\in L_{2}(a,b)^{d}.$ By the boundedness of $I$ we infer that
\[
\partial_{0}^{n}f_{k}=I\partial_{0}^{n+1}f_{k}\to Ig
\]
and hence, by induction hypothesis we infer $f\in\dom(\partial_{0}^{n})$
and $\partial_{0}^{n}f=Ig.$ Moreover, $\partial_{0}(\partial_{0}^{n}f_{k})\to g$
and $\partial_{0}^{n}f_{k}\to Ig=\partial_{0}^{n}f$, and hence, by
the closedness of $\partial_{0}$, we derive $f\in\dom(\partial_{0}^{n+1})$
with $\partial_{0}^{n+1}f=g,$ which shows that $\partial_{0}^{n+1}$
is closed. \\
That $C_{c}^{\infty}(a,b)^{d}$ is a core for $\partial_{0}$ follows
again by definition. We assume that $C_{c}^{\infty}(a,b)^{d}$ is
a core for $\partial_{0}^{n}$ for some $n\in\N$. Let $f\in\dom(\partial_{0}^{n+1}).$
Then $\partial_{0}f\in\dom(\partial_{0}^{n})$ and we find a sequence
$(\psi_{k})_{k}$ in $C_{c}^{\infty}(a,b)^{d}$ such that $\psi_{k}\to\partial_{0}f$
and $\psi_{k}^{(n)}\to\partial_{0}^{n+1}f$. Fix now a function $\varphi\in C^{\infty}(a,b)$
such that $\varphi=0$ near $a$ and $\varphi=1$ near $b$. We define
\[
\varphi_{k}\coloneqq I\psi_{k}-\varphi\int_{a}^{b}\psi_{k}(s)\d s\quad(k\in\N).
\]
Then $\varphi_{k}\in C_{c}^{\infty}(a,b)^{d}$ for each $k\in\N$
and 
\[
\varphi_{k}\to I\partial_{0}f-\varphi\int_{a}^{b}\partial_{0}f(s)\d s=f,
\]
where we have used that $\int_{a}^{b}\partial_{0}f(s)\d s=0,$ since
$f\in H_{0}^{1}(a,b)^{d}.$ Analogously 
\[
\varphi_{k}^{(n+1)}=\psi_{k}^{(n)}-\varphi^{(n+1)}\int_{a}^{b}\psi_{k}(s)\d s\to\partial_{0}^{n+1}f
\]
and hence, $C_{c}^{\infty}(a,b)^{d}$ is a core for $\partial_{0}^{n+1}.$ 
\end{proof}
\begin{prop}
The operator $A_{0}$ is closed and $C_{c}^{\infty}(a,b)^{d}$ is
a core for $A_{0}.$
\end{prop}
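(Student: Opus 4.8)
The plan is to exploit the factorisation $A_{0}=M\partial_{0}^{n}$ established just above the statement, where $M\coloneqq\sum_{k=0}^{n}P_{k}I^{n-k}$ is a bounded operator on $L_{2}(a,b)^{d}$ (a finite sum of products of the bounded operator $I$ with the constant matrices $P_{k}$). Since $\partial_{0}^{n}$ is already known to be closed with core $C_{c}^{\infty}(a,b)^{d}$ by \prettyref{lem:derivative_closed}, and $\dom(A_{0})=\dom(\partial_{0}^{n})=H_{0}^{n}(a,b)^{d}$, both assertions will follow at once once I show that $M$ is boundedly invertible.

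To obtain the invertibility of $M$, I would isolate the top-order coefficient and write $M=P_{n}(1+N)$ with $N\coloneqq P_{n}^{-1}\sum_{k=0}^{n-1}P_{k}I^{n-k}$; this is legitimate since $\det P_{n}\ne0$. Every summand of $N$ carries at least one factor of $I$, so $N=QI$ with $Q\coloneqq P_{n}^{-1}\sum_{k=0}^{n-1}P_{k}I^{n-k-1}$ bounded. The key structural point is that constant matrices commute with $I$ (integration acts componentwise), so $Q$ and $I$ commute. The Volterra operator $I$ is quasinilpotent on $L_{2}(a,b)^{d}$ — its iterated kernels $\frac{(t-s)^{m-1}}{(m-1)!}$ force $\|I^{m}\|^{1/m}\to0$ — hence its spectral radius is $0$. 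Submultiplicativity of the spectral radius for commuting operators then gives spectral radius $0$ for $N=QI$, so $1+N$ is invertible by a Neumann series, and therefore so is $M=P_{n}(1+N)$.

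With $M^{-1}$ bounded, closedness of $A_{0}$ is immediate: if $u_{k}\to u$ and $A_{0}u_{k}=M\partial_{0}^{n}u_{k}\to g$, then $\partial_{0}^{n}u_{k}=M^{-1}A_{0}u_{k}\to M^{-1}g$, and the closedness of $\partial_{0}^{n}$ yields $u\in\dom(\partial_{0}^{n})=\dom(A_{0})$ with $\partial_{0}^{n}u=M^{-1}g$, whence $A_{0}u=M\partial_{0}^{n}u=g$. For the core statement I would take $u\in\dom(A_{0})$ and use \prettyref{lem:derivative_closed} to choose $\varphi_{k}\in C_{c}^{\infty}(a,b)^{d}$ with $\varphi_{k}\to u$ and $\partial_{0}^{n}\varphi_{k}\to\partial_{0}^{n}u$; the boundedness of $M$ then gives $A_{0}\varphi_{k}=M\partial_{0}^{n}\varphi_{k}\to M\partial_{0}^{n}u=A_{0}u$, so $C_{c}^{\infty}(a,b)^{d}$ is a core for the (now closed) operator $A_{0}$.

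The main obstacle is the invertibility of $M$; everything else is a routine transfer of properties from $\partial_{0}^{n}$ through the bounded factor $M$. It is worth noting that the core statement alone requires only the boundedness of $M$, whereas the closedness genuinely uses that $M^{-1}$ is bounded — which is precisely where the quasinilpotency of the Volterra operator $I$ and the assumption $\det P_{n}\ne0$ enter.
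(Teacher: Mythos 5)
Your proposal is correct, and it follows the paper's overall strategy exactly: factor $A_{0}=\bigl(\sum_{k=0}^{n}P_{k}I^{n-k}\bigr)\partial_{0}^{n}$, prove that the bounded factor $M=\sum_{k=0}^{n}P_{k}I^{n-k}$ is boundedly invertible, and then transfer closedness and the core property from $\partial_{0}^{n}$ (Lemma on $\partial_0^n$). The only genuine divergence is in the key step, the invertibility of $M=P_{n}(1+N)$. The paper renorms $L_{2}(a,b)^{d}$ with the exponentially weighted norm $\|f\|_{\rho}=\bigl(\int_{a}^{b}\|f(t)\|^{2}\e^{-2\rho t}\d t\bigr)^{1/2}$ and shows $\|If\|_{\rho}^{2}\leq\frac{b-a}{2\rho}\|f\|_{\rho}^{2}$, so that for $\rho$ large the perturbation $N$ has norm strictly less than $1$ and the Neumann series applies directly. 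You instead invoke the quasinilpotency of the Volterra operator $I$ together with submultiplicativity of the spectral radius for commuting operators, using that the constant matrices $P_{k}$ commute with $I$ to write $N=QI$ with $Q$ commuting with $I$; this gives $r(N)=0$ and hence invertibility of $1+N$. Both arguments are valid and in fact prove the same underlying fact ($r(N)=0$, since the weighted norms are all equivalent). Your route leans on two classical facts (the kernel estimate $\|I^{m}\|\lesssim (b-a)^{m}/(m-1)!$ and the commuting spectral-radius inequality) and genuinely needs the commutativity of the $P_{k}$ with $I$, which holds here because the coefficients are constant; the paper's weighted-norm trick is more elementary and more robust, since it would survive variable bounded coefficients $P_{k}(x)$ that no longer commute with $I$. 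Your closing remark that the core statement needs only boundedness of $M$ while closedness needs bounded invertibility is accurate and matches how the paper uses the two properties.
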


\begin{proof}
Since $\partial_{0}^{n}$ is closed by \prettyref{lem:derivative_closed}
it suffices to show that $\sum_{k=0}^{n}P_{k}I^{n-k}$ is boundedly
invertible to obtain the closedness of $A_{0}$. For doing so, we
equip $L_{2}(a,b)^{d}$ with an equivalent norm given by 
\[
\|f\|_{\rho}\coloneqq\left(\int_{a}^{b}\|f(t)\|^{2}\e^{-2\rho t}\d t\right)^{\frac{1}{2}}\quad(f\in L_{2}(a,b)^{d})
\]
for some $\rho\in\R_{>0}$. Then 
\begin{align*}
\|If\|_{\rho}^{2} & =\int_{a}^{b}\|If(t)\|^{2}\e^{-2\rho t}\d t\\
 & =\int_{a}^{b}\|\int_{a}^{t}f(s)\d s\|^{2}\e^{-2\rho t}\d t\\
 & \leq\int_{a}^{b}\int_{a}^{t}\|f(s)\|^{2}\e^{-2\rho s}\d s\frac{1}{2\rho}\left(\e^{2\rho t}-\e^{2\rho a}\right)\e^{-2\rho t}\d t\\
 & \leq\frac{(b-a)}{2\rho}\|f\|_{\rho}^{2}
\end{align*}
for each $f\in L_{2}(a,b)^{d}$ and $\rho>0.$ Since, 
\[
\sum_{k=0}^{n}P_{k}I^{n-k}=P_{n}+\sum_{k=0}^{n-1}P_{k}I^{n-k}=P_{n}\left(1+\sum_{k=0}^{n-1}P_{n}^{-1}P_{k}I^{n-k}\right)
\]
and $\|\sum_{k=0}^{n-1}P_{n}^{-1}P_{k}I^{n-k}\|<1$ if we choose $\rho$
large enough, we infer that $\sum_{k=0}^{n}P_{k}I^{n-k}$ is indeed
boundedly invertible. Thus, $A_{0}$ is closed. Moreover, since $C_{c}^{\infty}(a,b)^{d}$
is a core for $\partial_{0}^{n}$ by \prettyref{lem:derivative_closed}
and $\sum_{k=0}^{n}P_{k}I^{n-k}$ is bounded, $C_{c}^{\infty}(a,b)^{d}$
is also a core for $A_{0}.$ 
\end{proof}
We now compute the adjoint of $A_{0}.$ We start with the following
prerequisite.
\begin{lem}
For each $n\in\N$ we have $(\partial_{0}^{n})^{\ast}=(-1)^{n}\partial^{n}.$
\end{lem}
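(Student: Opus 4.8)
The plan is to reduce the computation to test functions and then read off the adjoint from integration by parts. The central observation is that adjoints are insensitive to passing to a core: whenever $S\subseteq T$ is densely defined with $\overline{S}=T$, one has $S^{\ast}=(\overline{S})^{\ast}=T^{\ast}$. Since the preceding lemma guarantees that $C_{c}^{\infty}(a,b)^{d}$ is a core for $\partial_{0}^{n}$, it follows that $(\partial_{0}^{n})^{\ast}=\left(\partial_{0}^{n}|_{C_{c}^{\infty}(a,b)^{d}}\right)^{\ast}$, so it suffices to compute the adjoint of the restriction $D_{n}\coloneqq\partial_{0}^{n}|_{C_{c}^{\infty}(a,b)^{d}}$.

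First I would characterise $\dom(D_{n}^{\ast})$. By definition, $v\in\dom(D_{n}^{\ast})$ with $D_{n}^{\ast}v=w$ if and only if $\langle\varphi^{(n)},v\rangle_{L_{2}}=\langle\varphi,w\rangle_{L_{2}}$ for every $\varphi\in C_{c}^{\infty}(a,b)^{d}$. Since every such $\varphi$ has compact support in $(a,b)$, all boundary terms vanish, and integrating by parts $n$ times in the distributional sense gives $\langle\varphi^{(n)},v\rangle_{L_{2}}=(-1)^{n}\langle\varphi,v^{(n)}\rangle$, where $v^{(n)}$ denotes the $n$-th distributional derivative of $v$. Hence the defining identity for the adjoint is precisely the statement that $(-1)^{n}v^{(n)}=w$ holds in the sense of distributions.

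Thus the condition $v\in\dom(D_{n}^{\ast})$ forces $(-1)^{n}v^{(n)}=w\in L_{2}(a,b)^{d}$, which by definition of the Sobolev space means exactly $v\in H^{n}(a,b)^{d}$; conversely, every $v\in H^{n}(a,b)^{d}$ satisfies the identity with $w=(-1)^{n}\partial^{n}v$. Therefore $\dom(D_{n}^{\ast})=H^{n}(a,b)^{d}$ and $D_{n}^{\ast}v=(-1)^{n}\partial^{n}v$. Combining this with the reduction to the core yields $(\partial_{0}^{n})^{\ast}=(-1)^{n}\partial^{n}$ as claimed.

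I expect the only delicate point to be the clean identification of the distributional condition with membership in $H^{n}(a,b)^{d}$; everything else (the core reduction and the integration by parts for compactly supported test functions) is routine. An alternative, slightly longer route would proceed by induction on $n$, writing $(\partial_{0}^{n})^{\ast}=(\partial_{0}\partial_{0}^{n-1})^{\ast}$ and combining the base case $n=1$ with the closedness results already established, but this requires bookkeeping for the composition of adjoints, whereas the direct test-function computation above is more transparent.
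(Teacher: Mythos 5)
Your route is genuinely different from the paper's. The paper first obtains $(-1)^{n}\partial^{n}\subseteq(\partial_{0}^{n})^{\ast}$ from the general inclusion $B^{\ast}A^{\ast}\subseteq(AB)^{\ast}$, and then, for the reverse inclusion, introduces the auxiliary bounded operator $J$ (which leaves $C_{c}^{\infty}(a,b)^{d}$ invariant) to show first that any $u\in\dom\left((\partial_{0}^{n})^{\ast}\right)$ lies in $\dom(\partial)$, and then bootstraps to $\dom(\partial^{n})$ via the mapping properties of $J^{\ast}$. You instead pass to the core $C_{c}^{\infty}(a,b)^{d}$ --- legitimate, since $S^{\ast}=(\overline{S})^{\ast}$ and \prettyref{lem:derivative_closed} supplies the core --- and read the adjoint off as a distributional derivative. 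In principle this is the cleaner argument.

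However, the step you claim holds \emph{by definition of the Sobolev space} is exactly where the substance lies, and as written it is a gap. Your computation yields only that $v\in L_{2}(a,b)^{d}$ and that the $n$-th distributional derivative satisfies $v^{(n)}=(-1)^{n}w\in L_{2}(a,b)^{d}$. The definition of $H^{n}(a,b)^{d}$ requires \emph{all} derivatives $v',\ldots,v^{(n)}$ to lie in $L_{2}(a,b)^{d}$; you have control of the $0$-th and the $n$-th only. On a bounded interval the implication is true but needs an argument, for instance: set $W\coloneqq I^{n}\left((-1)^{n}w\right)$, the $n$-fold antiderivative, so that $W\in H^{n}(a,b)^{d}$ and $(v-W)^{(n)}=0$ in the sense of distributions; iterating the fact that a distribution on an interval with vanishing derivative is constant, $v-W$ is a polynomial of degree at most $n-1$, hence $v=W+(v-W)\in H^{n}(a,b)^{d}$. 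The paper's $J$-operator construction is performing precisely this regularity bootstrap in disguise. With that one additional step spelled out, your proof is complete and arguably shorter than the paper's.
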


\begin{proof}
Since in general $B^{\ast}A^{\ast}\subseteq(AB)^{\ast}$ for two operators
$A$ and $B$, we infer that $(-1)^{n}\partial^{n}\subseteq(\partial_{0}^{n})^{\ast}.$
Let now $u\in\dom\left((\partial_{0}^{n})^{\ast}\right).$ We first
show that $u\in\dom(\partial).$ Let $\zeta\in C^{\infty}(a,b)$ with
$\zeta=1$ near $b$ and $\zeta=0$ near $a$ and define the bounded
operator
\[
Jf\coloneqq If-\zeta\int_{a}^{b}f\quad(f\in L_{2}(a,b)^{d}).
\]
Then $J$ leaves $C_{c}^{\infty}(a,b)^{d}$ invariant. Let now $\psi\in C_{c}^{\infty}(a,b)^{d}$
and set $\varphi\coloneqq J^{n-1}\psi\in C_{c}^{\infty}(a,b)^{d}$.
Then inductively we compute 
\[
\varphi^{(k)}=J^{n-k-1}\psi-\sum_{j=1}^{k}\zeta^{(j)}\int_{a}^{b}J^{n-k+j-2}\psi
\]
for each $k\in\{1,\ldots,n-1\}$. In particular 
\[
\varphi^{(n-1)}=\psi-\sum_{j=1}^{n-1}\zeta^{(j)}\int_{a}^{b}J^{j-1}\psi
\]
and hence, 
\[
\varphi^{(n)}=\psi'-\sum_{j=1}^{n-1}\zeta^{(j+1)}\int_{a}^{b}J^{j-1}\psi.
\]
Thus, 
\begin{align*}
\langle u,\psi'\rangle & =\langle u,\varphi^{(n)}\rangle+\sum_{j=1}^{n-1}\langle u,\zeta^{(j+1)}\int_{a}^{b}J^{j-1}\psi\rangle\\
 & =\langle(\partial_{0}^{n})^{\ast}u,\varphi\rangle+\sum_{j=1}^{n-1}\langle\int_{a}^{b}\zeta^{(j+1)}u,J^{j-1}\psi\rangle\\
 & =\langle\left(J^{n-1}\right)^{\ast}(\partial_{0}^{n})^{\ast}u,\psi\rangle+\langle\sum_{j=1}^{n-1}\left(J^{j-1}\right)^{\ast}\int_{a}^{b}\zeta^{(j+1)}u,\psi\rangle,
\end{align*}
which shows $u\in\dom(\partial)$ with 
\[
\partial u=-\left(J^{n-1}\right)^{\ast}(\partial_{0}^{n})^{\ast}u-\sum_{j=1}^{n-1}\left(J^{j-1}\right)^{\ast}\int_{a}^{b}\zeta^{(j+1)}u.
\]
Since $J^{\ast}$ maps $L_{2}(a,b)^{d}$ to $H^{1}(a,b)^{d}$ and
constants to $H^{\infty}(a,b)^{d}$, we derive that $u\in\dom(\partial^{n}),$
which completes the proof. 
\end{proof}
\begin{prop}
\label{prop:port_H}The adjoint of $A_{0}$ is given by $-A$.
\end{prop}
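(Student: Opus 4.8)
The plan is to prove the two inclusions $-A\subseteq A_0^{\ast}$ and $A_0^{\ast}\subseteq-A$ separately. The first is an integration-by-parts computation that also pins down the action of $A_0^{\ast}$; the second, which is the genuine difficulty, rests on the factorisation $A_0=\left(\sum_{k=0}^{n}P_kI^{n-k}\right)\partial_0^{n}$ established above together with the identity $(\partial_0^{n})^{\ast}=(-1)^{n}\partial^{n}$.

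First I would establish $-A\subseteq A_0^{\ast}$. Take $u\in\dom(A)=H^{n}(a,b)^{d}$ and $v\in\dom(A_0)=H_0^{n}(a,b)^{d}$ and compute $\langle A_0v,u\rangle=\sum_{k=0}^{n}\langle v^{(k)},P_k^{\top}u\rangle$, using that the adjoint of the constant multiplication by $P_k$ is $P_k^{\top}$. Integrating by parts $k$ times, every boundary term involves a derivative $v^{(j)}$ with $j\le n-1$ and hence vanishes, since $v\in H_0^{n}(a,b)^{d}$. This yields $\langle A_0v,u\rangle=\sum_{k=0}^{n}(-1)^{k}\langle v,P_k^{\top}u^{(k)}\rangle$, and the structural identity $P_k^{\top}=(-1)^{k+1}P_k$ turns $(-1)^{k}P_k^{\top}$ into $-P_k$. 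Thus $\langle A_0v,u\rangle=\langle v,-Au\rangle$ for every $v\in\dom(A_0)$, so $u\in\dom(A_0^{\ast})$ with $A_0^{\ast}u=-Au$; this gives both $-A\subseteq A_0^{\ast}$ and the formula for $A_0^{\ast}$ on $H^{n}(a,b)^{d}$.

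The hard part is the reverse domain inclusion $\dom(A_0^{\ast})\subseteq H^{n}(a,b)^{d}$. Here I would exploit $A_0=Q\partial_0^{n}$ with $Q\coloneqq\sum_{k=0}^{n}P_kI^{n-k}$ bounded and boundedly invertible. Since $Q$ is bounded and everywhere defined, the adjoint product rule gives $A_0^{\ast}=(\partial_0^{n})^{\ast}Q^{\ast}=(-1)^{n}\partial^{n}Q^{\ast}$, so that $\dom(A_0^{\ast})=\{u\in L_2(a,b)^{d}\,;\,Q^{\ast}u\in\dom(\partial^{n})=H^{n}(a,b)^{d}\}$. Writing $Q^{\ast}=P_n^{\top}+R$ with $R=\sum_{k=0}^{n-1}(I^{\ast})^{n-k}P_k^{\top}$ and noting that $I^{\ast}$ is the antiderivative $(I^{\ast}g)(t)=\int_t^{b}g(s)\d s$, which raises Sobolev regularity by one, the remainder $R$ maps $H^{m}(a,b)^{d}$ into $H^{m+1}(a,b)^{d}$ for every $m\ge0$.

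I would then run a bootstrap on regularity. If $u\in\dom(A_0^{\ast})$ then $Q^{\ast}u\in H^{n}$, and since $Ru\in H^{1}$ we get $P_n^{\top}u=Q^{\ast}u-Ru\in H^{1}$; invertibility of $P_n^{\top}$, guaranteed by $\det P_n\ne0$, yields $u\in H^{1}$. Feeding $u\in H^{m}$ back in gives $Ru\in H^{m+1}$ and hence $P_n^{\top}u=Q^{\ast}u-Ru\in H^{\min(n,m+1)}=H^{m+1}$, so $u\in H^{m+1}$; iterating from $m=1$ up to $m=n$ gives $u\in H^{n}(a,b)^{d}$. Combining the two inclusions, $\dom(A_0^{\ast})\subseteq H^{n}(a,b)^{d}=\dom(A)$ and $-A\subseteq A_0^{\ast}$ force $A_0^{\ast}=-A$. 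The main obstacle is precisely this domain computation: a priori $\dom(A_0^{\ast})$ could be strictly larger than $H^{n}(a,b)^{d}$, and it is the combination of the invertible leading coefficient $P_n$ with the smoothing of $I^{\ast}$ that rules this out.
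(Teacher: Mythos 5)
Your proof is correct and follows essentially the same route as the paper: the adjoint is computed through the factorisation $A_{0}=\bigl(\sum_{k=0}^{n}P_{k}I^{n-k}\bigr)\partial_{0}^{n}$, the product rule for adjoints with a bounded factor, and the identity $(\partial_{0}^{n})^{\ast}=(-1)^{n}\partial^{n}$. You additionally spell out the regularity bootstrap (smoothing of $I^{\ast}$ plus invertibility of $P_{n}$) that the paper compresses into the single remark that $\dom(A_{0}^{\ast})\subseteq\dom(\partial^{n})$; the only caveat is that your symbol $Q$ for $\sum_{k=0}^{n}P_{k}I^{n-k}$ clashes with the matrix $Q$ defined in \prettyref{eq:Q}.
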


\begin{proof}
Recall that $A_{0}=\sum_{k=0}^{n}P_{k}I^{n-k}\partial_{0}^{n}$ and
hence, 
\begin{align*}
A_{0}^{\ast} & =\left(\partial_{0}^{n}\right)^{\ast}\left(\sum_{k=0}^{n}P_{k}I^{n-k}\right)^{\ast}\\
 & =(-1)^{n}\partial^{n}\sum_{k=0}^{n}\left(I^{n-k}\right)^{\ast}P_{k}^{\top}\\
 & =(-1)^{n}\partial^{n}P_{n}^{\top}+(-1)^{n}\partial^{n}\sum_{k=0}^{n-1}\left(I^{n-k}\right)^{\ast}P_{k}^{\top}.
\end{align*}
Thus, in particular, $\dom(A_{0}^{\ast})\subseteq\dom(\partial^{n})$
since $P_{n}$ is invertible. Moreover, an easy computation reveals
\[
I^{\ast}f(t)=\int_{t}^{b}f(s)\d s\quad(t\in[a,b],f\in L_{2}(a,b)^{d})
\]
and thus, $\partial I^{\ast}f=-f$ for each $f\in L_{2}(a,b)^{d}.$
The latter gives 
\begin{align*}
(-1)^{n}\partial^{n}\sum_{k=0}^{n-1}\left(I^{n-k}\right)^{\ast}P_{k}^{\top} & =(-1)^{n}\sum_{k=0}^{n-1}(-1)^{n-k}\partial^{k}P_{k}^{\top}\\
 & =-\sum_{k=0}^{n-1}\partial^{k}P_{k}\\
 & =-\sum_{k=0}^{n-1}P_{k}\partial^{k},
\end{align*}
on $\dom(A_{0}^{\ast})$, where we have used that $\dom(A_{0}^{\ast})\subseteq\dom(\partial^{n})$
in the last equality. Altogether, we infer 
\[
A_{0}^{\ast}=(-1)^{n}\partial^{n}P_{n}^{\top}-\sum_{k=0}^{n-1}P_{k}\partial^{k}=-A.\tag*{\qedhere}
\]
\end{proof}
We define the matrix 
\begin{equation}
Q\coloneqq\left(\begin{array}{ccccc}
P_{1} & P_{2} & \cdots & \cdots & P_{n}\\
-P_{2} & -P_{3} & \cdots & -P_{n} & 0\\
\vdots &  & \iddots\\
\vdots & \iddots\\
(-1)^{n+1}P_{n} & 0 &  &  & 0
\end{array}\right)\in\R^{nd\times nd}.\label{eq:Q}
\end{equation}
Then $Q$ is symmetric and invertible. Hence, we can orthogonally
decompose $\R^{nd}$ into the spaces 
\begin{align*}
E_{+} & \coloneqq\lin\{x\in\R^{nd}\,;\,\exists\lambda>0:Qx=\lambda x\}\\
E_{-} & \coloneqq\lin\{x\in\R^{nd}\,;\,\exists\lambda<0:Qx=\lambda x\},
\end{align*}
such that 
\[
\R^{nd}=E_{+}\oplus E_{-}.
\]
We denote the canonical embeddings of $E_{+}$ and $E_{-}$ in $\R^{nd}$
by $\iota_{+}\colon E_{+}\to\R^{nd}$ and $\iota_{-}\colon E_{-}\to\R^{nd}$
respectively. Then the operators $\iota_{+}^{\ast}Q\iota_{+}$ and
$\iota_{-}^{\ast}(-Q)\iota_{-}$ define positive selfadjoint operators
on $E_{+}$ and $E_{-}$ respectively, and we set 
\begin{equation}
Q_{+}\coloneqq\iota_{+}\left(\iota_{+}^{\ast}Q\iota_{+}\right)^{\frac{1}{2}}\iota_{+}^{\ast},\,Q_{-}\coloneqq\iota_{-}\left(\iota_{-}^{\ast}(-Q)\iota_{-}\right)^{\frac{1}{2}}\iota_{-}^{\ast}.\label{eq:Q_+}
\end{equation}
Note that 
\[
Q=Q_{+}^{2}-Q_{-}^{2}.
\]
Moreover, we define the operator 
\begin{equation}
\tr_{t}\colon H^{n}(a,b)^{d}\to\R^{nd},\quad u\mapsto\left(\begin{array}{c}
u(t)\\
\vdots\\
u^{(n-1)}(t)
\end{array}\right)\label{eq:trace}
\end{equation}
for $t\in[a,b].$ Note that this operator is well-defined by the Sobolev
embedding theorem. 
\begin{lem}
\label{lem:trace_onto}The operator 
\[
\left(\begin{array}{c}
\tr_{b}\\
\tr_{a}
\end{array}\right)\colon H^{n}(a,b)^{d}\to\R^{nd}\times\R^{nd}
\]
is onto.
\end{lem}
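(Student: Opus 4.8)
The plan is to reduce the problem to the two endpoints separately and then build an explicit preimage by hand. Since the target space $\R^{nd}\times\R^{nd}$ splits into the jet data at $b$ and the jet data at $a$, and since the two endpoints are distinct, a localisation argument with a cutoff function will decouple the two conditions cleanly. Write a prescribed element of the target as $\bigl((\xi_0,\dots,\xi_{n-1}),(\eta_0,\dots,\eta_{n-1})\bigr)$ with $\xi_j,\eta_j\in\R^d$.

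First I would fix a cutoff $\varphi\in C^\infty(a,b)$ with $\varphi=1$ near $b$ and $\varphi=0$ near $a$ (precisely the kind of function already employed above). Then I would form the two Taylor polynomials
\[
p(t)\coloneqq\sum_{j=0}^{n-1}\frac{1}{j!}\xi_j(t-b)^j,\qquad q(t)\coloneqq\sum_{j=0}^{n-1}\frac{1}{j!}\eta_j(t-a)^j,
\]
and set $u\coloneqq\varphi p+(1-\varphi)q$. As a smooth $\R^d$-valued function on $[a,b]$, $u$ certainly lies in $H^n(a,b)^d$, so it is an admissible candidate preimage.

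It then remains to verify that $\tr_b u=(\xi_0,\dots,\xi_{n-1})$ and $\tr_a u=(\eta_0,\dots,\eta_{n-1})$. The key observation is that $\varphi$ is constant (equal to $1$) in a neighbourhood of $b$ and constant (equal to $0$) in a neighbourhood of $a$, so that near $b$ we simply have $u=p$ and near $a$ we have $u=q$; equivalently, by the Leibniz rule every term carrying a derivative of $\varphi$ drops out at the relevant endpoint. Consequently, for each $k\in\{0,\dots,n-1\}$,
\[
u^{(k)}(b)=p^{(k)}(b)=\xi_k,\qquad u^{(k)}(a)=q^{(k)}(a)=\eta_k,
\]
the final equalities being exactly the interpolation property of the Taylor polynomials. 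This proves surjectivity.

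I do not expect any serious obstacle here; the only point requiring care is the bookkeeping in the Leibniz rule, which is entirely tamed by the locally constant behaviour of $\varphi$ at the two endpoints. As an alternative route that avoids cutoffs, one could argue by a dimension count: restricted to the $2nd$-dimensional space of $\R^d$-valued polynomials of degree at most $2n-1$, the map $\tr_b\oplus\tr_a$ is a linear map into $\R^{nd}\times\R^{nd}\cong\R^{2nd}$, and it is injective because a componentwise polynomial of degree $\le 2n-1$ vanishing to order $n$ at both $a$ and $b$ is divisible by $(t-a)^n(t-b)^n$ and hence identically zero; injectivity between equidimensional spaces then forces surjectivity.
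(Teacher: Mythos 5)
Your primary argument is correct but takes a different route from the paper. The paper restricts the trace map to the $2nd$-dimensional space of $\R^d$-valued polynomials of degree at most $2n-1$ and proves injectivity there (a polynomial of degree $\le 2n-1$ vanishing to order $n$ at both endpoints must be zero), concluding surjectivity by equality of dimensions — i.e.\ precisely the Hermite-interpolation argument you sketch only as an ``alternative route'' at the end. Your main construction instead glues the two Taylor polynomials with a cutoff $\varphi$ that is locally constant near each endpoint, so that $u=p$ near $b$ and $u=q$ near $a$ and the jets are matched trivially. This is perfectly valid: $u$ is smooth on $[a,b]$, hence in $H^n(a,b)^d$, and no Leibniz bookkeeping is actually needed since $u$ literally coincides with the respective polynomial on a neighbourhood of each endpoint. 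What your approach buys is an explicit, fully decoupled preimage and an argument that generalises with no change to any number of interior or boundary interpolation points; what the paper's approach buys is the sharper statement that the finite-dimensional space $P_{2n-1}(a,b)^d$ already maps \emph{bijectively} onto $\R^{nd}\times\R^{nd}$, which is a cleaner linear-algebra fact. Either proof is complete; no gaps.
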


\begin{proof}
This is a consequence of the so-called Hermite interpolation. We show
that the mapping 
\[
\left(\begin{array}{c}
\tr_{b}\\
\tr_{a}
\end{array}\right)\colon P_{2n-1}(a,b)^{d}\to\R^{nd}\times\R^{nd}
\]
is onto, where $P_{2n-1}(a,b)$ denotes the space of polynomials on
$(a,b)$ up to degree $2n-1.$ Since both spaces are $2nd$-dimensional,
it suffices to check that the above mapping is one-to-one. Moreover,
it suffices to treat the case $d=1,$ since in the general case one
can argue coordinate-wise. So, let $p\in P_{2n-1}(a,b)$ such that
\[
\left(\begin{array}{c}
\tr_{b}\\
\tr_{a}
\end{array}\right)p=0,
\]
that is 
\[
p(a)=\ldots=p^{(n-1)}(a)=p(b)=\ldots=p^{(n-1)}(b)=0.
\]
We represent $p$ by 
\[
p(x)=\sum_{j=0}^{n-1}c_{j}(x-a)^{j}+(x-a)^{n}\sum_{k=0}^{n-1}d_{k}(x-b)^{k}\quad(x\in[a,b]),
\]
for suitable $c_{0},\ldots,c_{n-1},d_{0},\ldots,d_{n-1}\in\R.$ Since
the derivatives up to order $n-1$ of the second part $(x-a)^{n}\sum_{k=0}^{n-1}d_{k}(x-b)^{k}$
vanish in $a$, we infer that 
\[
0=p^{(j)}(a)=j!c_{j}\quad(j\in\{0,\ldots,n-1\}),
\]
and thus, $c_{0}=\ldots=c_{n-1}=0.$ Hence, 
\[
p(x)(x-a)^{-n}=\sum_{k=0}^{n-1}d_{k}(x-b)^{k}\quad(x\in[a,b]).
\]
Differentiating both sides $k\leq n-1$ times and evaluate them at
$b$, we obtain 
\[
0=k!d_{k}
\]
and thus, $d_{0}=\ldots=d_{n-1}=0,$ which shows $p=0.$ 
\end{proof}
\begin{prop}
\label{prop:bd_system_pH}Let $A$ be as in \prettyref{eq:A}. Then
$\left(F,\R^{nd},\R^{nd}\right)$ with 
\[
Fu\coloneqq\left(\begin{array}{c}
Q_{+}\tr_{b}u+Q_{-}\tr_{a}u\\
Q_{-}\tr_{b}u+Q_{+}\tr_{a}u
\end{array}\right)\quad(u\in H^{n}(a,b)^{d})
\]
defines a boundary system for $A$.
\end{prop}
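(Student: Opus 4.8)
The plan is to verify the three defining properties of a boundary system: linearity of $F$, the Green-type pairing identity, and surjectivity of $F$. Linearity is immediate, since $\tr_{b},\tr_{a}$ are linear and $Q_{+},Q_{-}$ are matrices. The heart of the matter is a Lagrange/Green identity expressing $\langle Au,v\rangle+\langle u,Av\rangle$ through the boundary traces and the matrix $Q$; once it is in hand, the passage to $F_{1},F_{2}$ is a short algebraic manipulation and surjectivity follows from \prettyref{lem:trace_onto}.

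First I would establish the Green formula
\[
\langle Au,v\rangle+\langle u,Av\rangle=\langle Q\tr_{b}u,\tr_{b}v\rangle_{\R^{nd}}-\langle Q\tr_{a}u,\tr_{a}v\rangle_{\R^{nd}}\qquad(u,v\in H^{n}(a,b)^{d}).
\]
Using $P_{k}^{\top}=(-1)^{k+1}P_{k}$, the $k$-th summand of $\langle Au,v\rangle+\langle u,Av\rangle$ becomes $\langle P_{k}u^{(k)},v\rangle-(-1)^{k}\langle P_{k}u,v^{(k)}\rangle$, and the key point is the pointwise identity
\[
\langle P_{k}u^{(k)},v\rangle-(-1)^{k}\langle P_{k}u,v^{(k)}\rangle=\frac{\d}{\d x}\sum_{j=0}^{k-1}(-1)^{j}\langle P_{k}u^{(k-1-j)},v^{(j)}\rangle,
\]
which one checks by differentiating the right-hand side and observing the telescoping cancellation. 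Integrating over $(a,b)$ and summing over $k$ leaves a pure boundary expression; reindexing the double sum by the derivative orders carried by $u$ and $v$ and comparing with the block entries $(-1)^{p+1}P_{p+q-1}$ (which vanish once $p+q-1>n$) of $Q$ from \prettyref{eq:Q} identifies the boundary term at $t$ with $\langle Q\tr_{t}u,\tr_{t}v\rangle$. This careful bookkeeping of signs and indices, and the matching with $Q$, is the main obstacle; the remaining steps are formal. It suffices to verify the identity for smooth $u,v$ and extend to $H^{n}(a,b)^{d}$ by density, both sides being continuous for the graph norm of $A$ since the traces are continuous by the Sobolev embedding.

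Given the Green formula, I would insert $Q=Q_{+}^{2}-Q_{-}^{2}$. Because $E_{+}\perp E_{-}$, the symmetric matrices $Q_{+},Q_{-}$ satisfy $Q_{+}Q_{-}=Q_{-}Q_{+}=0$. Expanding
\[
\langle F_{1}u,F_{1}v\rangle-\langle F_{2}u,F_{2}v\rangle,\qquad F_{1}u=Q_{+}\tr_{b}u+Q_{-}\tr_{a}u,\quad F_{2}u=Q_{-}\tr_{b}u+Q_{+}\tr_{a}u,
\]
all mixed products containing $Q_{+}Q_{-}$ drop out, and using $Q_{\pm}=Q_{\pm}^{\top}$ the surviving terms collapse to $\langle(Q_{+}^{2}-Q_{-}^{2})\tr_{b}u,\tr_{b}v\rangle-\langle(Q_{+}^{2}-Q_{-}^{2})\tr_{a}u,\tr_{a}v\rangle$, which by the Green formula equals $\langle Au,v\rangle+\langle u,Av\rangle$. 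This is precisely the boundary pairing required of a boundary system.

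Finally, for surjectivity I would write $F=M\left(\begin{smallmatrix}\tr_{b}\\\tr_{a}\end{smallmatrix}\right)$ with $M=\left(\begin{smallmatrix}Q_{+}&Q_{-}\\Q_{-}&Q_{+}\end{smallmatrix}\right)\in\R^{2nd\times2nd}$. By \prettyref{lem:trace_onto} the map $\left(\begin{smallmatrix}\tr_{b}\\\tr_{a}\end{smallmatrix}\right)$ is onto $\R^{nd}\times\R^{nd}$, so it suffices that $M$ be surjective. Decomposing each copy of $\R^{nd}$ as $E_{+}\oplus E_{-}$ and using that $Q_{+}$ is positive definite on $E_{+}$ and annihilates $E_{-}$ while $Q_{-}$ is positive definite on $E_{-}$ and annihilates $E_{+}$, the action of $M$ decouples into four invertible blocks, so $M$ is bijective. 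Hence $F$ is onto, and $(F,\R^{nd},\R^{nd})$ is a boundary system for $A$.
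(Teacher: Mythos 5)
Your proposal is correct and follows essentially the same route as the paper: the Green identity $\langle Au,v\rangle+\langle u,Av\rangle=\langle Q\tr_{b}u,\tr_{b}v\rangle-\langle Q\tr_{a}u,\tr_{a}v\rangle$ via integration by parts, the algebraic reduction using $Q=Q_{+}^{2}-Q_{-}^{2}$ and the orthogonality of $E_{+}$ and $E_{-}$, and surjectivity via \prettyref{lem:trace_onto} together with invertibility of $\left(\begin{smallmatrix}Q_{+}&Q_{-}\\Q_{-}&Q_{+}\end{smallmatrix}\right)$. Your telescoping total-derivative formulation of the integration by parts and your block-decoupling argument for the invertibility of that matrix are only cosmetic variants of the paper's iterated integration by parts and kernel computation.
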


\begin{proof}
Let $u,v\in\dom(A)=H^{n}(a,b)^{d}.$ Then we compute 
\begin{align*}
\langle Au,v\rangle_{L_{2}} & =\sum_{k=0}^{n}\langle P_{k}u^{(k)},v\rangle_{L_{2}}\\
 & =\langle P_{0}u,v\rangle_{L_{2}}+\sum_{k=1}^{n}\langle P_{k}u^{(k)},v\rangle_{L_{2}}.
\end{align*}
For $k\in\{1,\ldots,n\}$ we further compute 
\begin{align*}
\langle u^{(k)},v\rangle_{L_{2}} & =\langle u^{(k-1)}(b),v(b)\rangle-\langle u^{(k-1)}(a),v(a)\rangle-\langle u^{(k-1)},v'\rangle_{L_{2}}\\
 & \vdots\\
 & =\sum_{j=1}^{k}(-1)^{j+1}\left(\langle u^{(k-j)}(b),v^{(j-1)}(b)\rangle-\langle u^{(k-j)}(a),v^{(j-1)}(a)\rangle\right)+(-1)^{k}\langle u,v^{(k)}\rangle_{L_{2}}
\end{align*}
and hence, 
\begin{align*}
 & \sum_{k=1}^{n}\langle P_{k}u^{(k)},v\rangle_{L_{2}}\\
 & =\sum_{k=1}^{n}\left(\sum_{j=1}^{k}(-1)^{j+1}\left(\langle P_{k}u^{(k-j)}(b),v^{(j-1)}(b)\rangle-\langle P_{k}u^{(k-j)}(a),v^{(j-1)}(a)\rangle\right)+(-1)^{k}\langle P_{k}u,v^{(k)}\rangle_{L_{2}}\right)\\
 & =\sum_{j=1}^{n}\left(\langle\sum_{k=j}^{n}(-1)^{j+1}P_{k}u^{(k-j)}(b),v^{(j-1)}(b)\rangle-\langle\sum_{k=j}^{n}(-1)^{j+1}P_{k}u^{(k-j)}(a),v^{(j-1)}(a)\rangle\right)-\langle u,\sum_{k=1}^{n}P_{k}v^{(k)}\rangle_{L_{2}}\\
 & =\langle Q\tr_{b}u,\tr_{b}v\rangle-\langle Q\tr_{a}u,\tr_{a}v\rangle-\langle u,\sum_{k=1}^{n}P_{k}v^{(k)}\rangle_{L_{2}}.
\end{align*}
Summarising, we have 
\begin{align*}
\langle Au,v\rangle & =\langle Q\tr_{b}u,\tr_{b}v\rangle-\langle Q\tr_{a}u,\tr_{a}v\rangle-\langle u,\sum_{k=1}^{n}P_{k}v^{(k)}\rangle_{L_{2}}+\langle P_{0}u,v\rangle_{L_{2}}\\
 & =\langle Q\tr_{b}u,\tr_{b}v\rangle-\langle Q\tr_{a}u,\tr_{a}v\rangle-\langle u,Av\rangle_{L_{2}}.
\end{align*}
Recalling $Q=Q_{+}^{2}-Q_{-}^{2},$ we may rewrite the latter expression
as follows:
\begin{align*}
\langle Q\tr_{b}u,\tr_{b}v\rangle-\langle Q\tr_{a}u,\tr_{a}v\rangle & =\langle Q_{+}\tr_{b}u+Q_{-}\tr_{a}u,Q_{+}\tr_{b}v+Q_{-}\tr_{a}u\rangle-\\
 & \quad-\langle Q_{-}\tr_{b}u+Q_{+}\tr_{a}u,Q_{-}\tr_{b}v+Q_{+}\tr_{a}u\rangle\\
 & =\langle F_{1}u,F_{1}v\rangle-\langle F_{2}u,F_{2}v\rangle.
\end{align*}
It remains to show that $F\colon H^{n}(a,b)^{d}\to\R^{nd}\times\R^{nd}$
is onto. First we note that $\left(\begin{array}{c}
\tr_{b}\\
\tr_{a}
\end{array}\right)\colon H^{n}(a,b)^{d}\to\R^{nd}\times\R^{nd}$ is onto by \prettyref{lem:trace_onto}. Then $F$ can be expressed
by 
\[
F=\left(\begin{array}{cc}
Q_{+} & Q_{-}\\
Q_{-} & Q_{+}
\end{array}\right)\left(\begin{array}{c}
\tr_{b}\\
\tr_{a}
\end{array}\right)\colon H^{n}(a,b)^{d}\to\R^{nd}\times\R^{nd},
\]
and hence, it suffices to prove that the matrix $\left(\begin{array}{cc}
Q_{+} & Q_{-}\\
Q_{-} & Q_{+}
\end{array}\right)$ is invertible. For doing so, let $(x,y)\in\R^{nd}\times\R^{nd}$
with $\left(\begin{array}{cc}
Q_{+} & Q_{-}\\
Q_{-} & Q_{+}
\end{array}\right)\left(\begin{array}{c}
x\\
y
\end{array}\right)=0,$ that is 
\begin{align*}
Q_{+}x+Q_{-}y & =0,\\
Q_{-}x+Q_{+}y & =0.
\end{align*}
Since $Q_{+}$ and $Q_{-}$ attain values in the orthogonal spaces
$E_{+}$ and $E_{-}$ respectively, it follows that $Q_{+}x=Q_{-}y=0=Q_{-}x=Q_{+}y.$
Hence, also $Qx=\left(Q_{+}^{2}-Q_{-}^{2}\right)x=0=Qy$ and since
$Q$ is invertible, we infer that $x=y=0.$ Thus $\left(\begin{array}{cc}
Q_{+} & Q_{-}\\
Q_{-} & Q_{+}
\end{array}\right)$ is invertible and the assertion follows. 
\end{proof}
As a direct consequence of the latter result and our main result \prettyref{thm:main},
we arrive at the following characterisation of m-accretive restrictions
of $A$. 
\begin{cor}
\label{cor:pH_unweighted}Let $A$ be as in \prettyref{eq:A} and
$B\subseteq A.$ Then $B$ is m-accretive on $L_{2}(a,b)^{d}$ if
and only if there exists a contraction 
\[
g\colon\R^{nd}\to\R^{nd}
\]
such that 
\[
\dom(B)=\{u\in H^{n}(a,b)^{d}\,;\,g(Q_{+}\tr_{b}u+Q_{-}\tr_{a}u)=Q_{-}\tr_{b}u+Q_{+}\tr_{a}u\}.
\]
\end{cor}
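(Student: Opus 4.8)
The plan is to obtain this corollary as a direct application of our main result \prettyref{thm:main} to the concrete boundary system constructed in \prettyref{prop:bd_system_pH}. By that proposition, the triple $\left(F,\R^{nd},\R^{nd}\right)$ with components
\[
F_{1}u=Q_{+}\tr_{b}u+Q_{-}\tr_{a}u,\qquad F_{2}u=Q_{-}\tr_{b}u+Q_{+}\tr_{a}u\quad(u\in H^{n}(a,b)^{d})
\]
is a boundary system for $A$, where both boundary spaces $G_{1}=G_{2}=\R^{nd}$ carry the standard Euclidean inner product. Since \prettyref{prop:port_H} shows that $A=-A_{0}^{\ast}$ for the skew-symmetric closed operator $A_{0}$, the abstract framework developed above applies verbatim to this situation.

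First I would invoke \prettyref{thm:main} for this boundary system: a restriction $B\subseteq A$ is m-accretive if and only if there exists a contraction $g\colon G_{1}\to G_{2}$ satisfying $\|g(a)-g(b)\|_{G_{2}}\leq\|a-b\|_{G_{1}}$ for all $a,b\in G_{1}$, such that $\dom(B)=\{u\in\dom(A)\,;\,g(F_{1}u)=F_{2}u\}$. Substituting the explicit expressions for $F_{1}$ and $F_{2}$ recalled above, and using $\dom(A)=H^{n}(a,b)^{d}$, this characterisation becomes
\[
\dom(B)=\{u\in H^{n}(a,b)^{d}\,;\,g(Q_{+}\tr_{b}u+Q_{-}\tr_{a}u)=Q_{-}\tr_{b}u+Q_{+}\tr_{a}u\},
\]
which is exactly the asserted description of $\dom(B)$.

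The only point that requires any attention — and the nearest thing to an obstacle in an otherwise purely formal deduction — is to confirm that the contractivity condition appearing in \prettyref{thm:main}, phrased with respect to $\|\cdot\|_{G_{1}}$ and $\|\cdot\|_{G_{2}}$, coincides with ordinary contractivity for the Euclidean norm on $\R^{nd}$. This is immediate, since in \prettyref{prop:bd_system_pH} the spaces $G_{1}$ and $G_{2}$ were both chosen to be $\R^{nd}$ endowed with the standard inner product, so that $\|\cdot\|_{G_{1}}=\|\cdot\|_{G_{2}}$ is the usual Euclidean norm and no rescaling intervenes. Hence the corollary follows from \prettyref{thm:main} and \prettyref{prop:bd_system_pH} with no further computation.
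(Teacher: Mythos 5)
Your proposal is correct and coincides with the paper's own argument: the corollary is stated there as a direct consequence of \prettyref{prop:bd_system_pH} combined with \prettyref{thm:main}, exactly as you describe. Your additional remark that $G_{1}=G_{2}=\R^{nd}$ carry the standard Euclidean norm, so no norm adjustment is needed, is a sensible sanity check but not a genuine obstacle.
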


We summarise the findings of this subsection in the following theorem,
which is an immediate consequence of \prettyref{cor:pH_unweighted}
and \prettyref{lem:congruence} (b).
\begin{thm}
\label{thm:pH_m-accretive}Let $n,d\in\N$ and $P_{0},\ldots,P_{n}\in\R^{d\times d}$
with $P_{j}^{\top}=(-1)^{j+1}P_{j}$ and $\det P_{n}\ne0.$ Moreover,
let $\mathcal{H}\in L_{\infty}(a,b;\R^{d\times d})$ be such that
$\mathcal{H}(x)$ is symmetric for almost every $x\in[a,b]$ and there
exists $c>0$ such that 
\[
\langle\mathcal{H}(x)v,v\rangle_{\R^{d}}\geq c\|v\|_{\R^{d}}^{2}\quad(v\in\R^{d},x\in[a,b]\text{ a.e.}).
\]
Consider the following operator 
\[
A\colon\dom(A)\subseteq H\to H,\quad Au=\sum_{k=0}^{n}P_{k}\partial^{k}(\mathcal{H}u),
\]
where $H=L_{2}(a,b)^{d}$ equipped with the inner product 
\[
\langle u,v\rangle_{H}\coloneqq\int_{a}^{b}\langle\mathcal{H}(x)u(x),v(x)\rangle\d x\quad(u,v\in L_{2}(a,b)^{d})
\]
and 
\[
\dom(A)\coloneqq\{u\in H\,;\,\mathcal{H}u\in H^{n}(a,b)^{d}\}.
\]
Moreover, let $B\subseteq A.$ Then the following statements are equivalent:

\begin{enumerate}[(i)]

\item $B$ is m-accretive on $H$,

\item There exists a contraction 
\[
g\colon\R^{nd}\to\R^{nd},
\]
such that 
\[
\dom(B)=\{u\in\dom(A)\,;\,g(Q_{+}\tr_{b}\mathcal{H}u+Q_{-}\tr_{a}\mathcal{H}u)=Q_{-}\tr_{b}\mathcal{H}u+Q_{+}\tr_{a}\mathcal{H}u\},
\]
where $\tr_{a},\tr_{b}$ are given by \prettyref{eq:trace} and $Q_{+}$
and $Q_{-}$are defined in \prettyref{eq:Q_+}. 

\end{enumerate}
\end{thm}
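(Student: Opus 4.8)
The plan is to deduce Theorem~\ref{thm:pH_m-accretive} directly from the two results established just before it, namely the unweighted characterisation in Corollary~\ref{cor:pH_unweighted} and the congruence reduction in Lemma~\ref{lem:congruence}~(b). The whole content of the theorem is that the weighted problem (general $\mathcal{H}$) can be transported to the unweighted problem ($\mathcal{H}=1_{d\times d}$) via the similarity transform $S\colon u\mapsto\mathcal{H}u$, so there should be essentially nothing new to prove beyond bookkeeping of the boundary data under this transform.

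First I would invoke Lemma~\ref{lem:congruence}~(b): a restriction $B\subseteq A$ is m-accretive on $H$ if and only if the operator $\tilde{B}\subseteq\sum_{k=0}^{n}P_{k}\partial^{k}$ with $\dom(\tilde{B})=\{w\in H^{n}(a,b)^{d}\,;\,\mathcal{H}^{-1}w\in\dom(B)\}$ is m-accretive on $L_{2}(a,b)^{d}$ with the standard inner product. This reduces statement (i) to the m-accretivity of $\tilde{B}$, which is exactly the setting of Corollary~\ref{cor:pH_unweighted}. Applying that corollary, $\tilde{B}$ is m-accretive if and only if there is a contraction $g\colon\R^{nd}\to\R^{nd}$ with
\[
\dom(\tilde{B})=\{w\in H^{n}(a,b)^{d}\,;\,g(Q_{+}\tr_{b}w+Q_{-}\tr_{a}w)=Q_{-}\tr_{b}w+Q_{+}\tr_{a}w\}.
\]
The only remaining task is to translate the description of $\dom(\tilde{B})$ back into a description of $\dom(B)$.

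The translation is the substitution $w=\mathcal{H}u$, i.e.\ $u=\mathcal{H}^{-1}w$. Since $\dom(\tilde{B})=S[\dom(B)]$ by definition, a function $u$ lies in $\dom(B)$ precisely when $w=\mathcal{H}u\in\dom(\tilde{B})$; note $\mathcal{H}u\in H^{n}(a,b)^{d}$ is exactly the membership condition $u\in\dom(A)$. Substituting $\tr_{a}w=\tr_{a}\mathcal{H}u$ and $\tr_{b}w=\tr_{b}\mathcal{H}u$ into the boundary relation above yields
\[
\dom(B)=\{u\in\dom(A)\,;\,g(Q_{+}\tr_{b}\mathcal{H}u+Q_{-}\tr_{a}\mathcal{H}u)=Q_{-}\tr_{b}\mathcal{H}u+Q_{+}\tr_{a}\mathcal{H}u\},
\]
which is exactly statement (ii). Both implications run simultaneously since every step is an equivalence.

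I do not anticipate a genuine obstacle here, as the result is explicitly flagged in the excerpt as ``an immediate consequence'' of the two cited results. The only point requiring a little care is the identification of the trace data: one must confirm that the relevant object on which $\tr_{a},\tr_{b}$ act is the transformed function $\mathcal{H}u$ rather than $u$ itself, which is precisely why the traces appear composed with $\mathcal{H}$ in statement (ii). I would therefore keep the proof short, stating explicitly that we combine Lemma~\ref{lem:congruence}~(b) and Corollary~\ref{cor:pH_unweighted} and read off the domain of $B$ from that of $\tilde{B}$ through the substitution $w=\mathcal{H}u$.
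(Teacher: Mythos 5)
Your proposal is correct and follows exactly the route the paper takes: the paper states the theorem as an immediate consequence of Corollary \ref{cor:pH_unweighted} and Lemma \ref{lem:congruence}~(b), and your argument simply spells out the substitution $w=\mathcal{H}u$ that the paper leaves implicit. The bookkeeping of the traces (acting on $\mathcal{H}u$ rather than $u$) is handled correctly.
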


This result is well-known for linear restrictions, although it is
mostly presented in a different manner. To see the connection, we
recall a result from \cite{Trostorff_Waurick_2023}.
\begin{lem}[{\cite[Lemma 2.3]{Trostorff_Waurick_2023}}]
\label{lem:Moppi+ich} Let $W\in\R^{nd\times2nd}.$ Then the following
statements are equivalent

\begin{enumerate}[(i)]

\item $W$ has rank $nd$ and satisfies 
\[
W\left(\begin{array}{cc}
-Q & Q\\
1 & 1
\end{array}\right)^{-1}\left(\begin{array}{cc}
0 & 1\\
1 & 0
\end{array}\right)\left(W\left(\begin{array}{cc}
-Q & Q\\
1 & 1
\end{array}\right)^{-1}\right)^{\ast}\geq0.
\]

\item There exists a matrix $M\in\R^{nd\times nd}$ with $\|M\|\leq1$
and an invertible matrix $K\in\R^{nd\times nd}$ such that 
\[
W=K\left(\begin{array}{cc}
Q_{-}-MQ_{+} & Q_{+}-MQ_{-}\end{array}\right).
\]

\end{enumerate}
\end{lem}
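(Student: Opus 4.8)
The plan is to run both conditions through one change of variables so that they become statements about the two $nd\times nd$ blocks of a single matrix, and then to finish with an elementary matrix inequality; throughout, $\ast$ denotes the transpose (all matrices are real) and $1$ the identity of the appropriate size. First I would introduce $R\coloneqq\begin{pmatrix}Q_+ & Q_-\\ Q_- & Q_+\end{pmatrix}\in\R^{2nd\times 2nd}$, which is symmetric (as $Q_+,Q_-$ are) and invertible, the latter being exactly what was shown at the end of the proof of \prettyref{prop:bd_system_pH}. Put $V\coloneqq WR^{-1}$ and write $V=(V_1,V_2)$ with $V_1,V_2\in\R^{nd\times nd}$. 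Using the identity $(Q_- - MQ_+,\ Q_+ - MQ_-)=(-M,\ 1)R$, condition (ii) says that $W=K(-M,1)R$ for some invertible $K$ and some $M$ with $\|M\|\le1$, i.e.\ $V=(-KM,K)$; reading off the blocks, this is the same as saying that $V_2$ is invertible and $\|V_2^{-1}V_1\|\le1$ (take $K=V_2$ and $M=-V_2^{-1}V_1$).

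Second I would rewrite (i) in the variable $V$. With $T\coloneqq\begin{pmatrix}-Q & Q\\ 1 & 1\end{pmatrix}$ one checks $T^{-1}=\tfrac12\begin{pmatrix}-Q^{-1} & 1\\ Q^{-1} & 1\end{pmatrix}$, and a short computation gives
\[
T^{-1}\begin{pmatrix}0 & 1\\ 1 & 0\end{pmatrix}(T^{-1})^{\ast}=\tfrac12\begin{pmatrix}-Q^{-1} & 0\\ 0 & Q^{-1}\end{pmatrix}.
\]
Since $W=VR$ and $R=R^{\ast}$, the matrix appearing in (i) equals $VNV^{\ast}$ with $N\coloneqq\tfrac12 R\begin{pmatrix}-Q^{-1} & 0\\ 0 & Q^{-1}\end{pmatrix}R$. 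The core of the argument is to evaluate $N$: writing $P_\pm$ for the orthogonal projections onto $E_\pm$ and using $Q=Q_+^2-Q_-^2$ together with the orthogonality of the ranges of $Q_+$ and $Q_-$, one obtains $Q_+Q^{-1}Q_+=P_+$, $Q_-Q^{-1}Q_-=-P_-$ and $Q_+Q^{-1}Q_-=Q_-Q^{-1}Q_+=0$, whence $N=\tfrac12\begin{pmatrix}-1 & 0\\ 0 & 1\end{pmatrix}$. Therefore the quadratic form in (i) is $\tfrac12\bigl(V_2V_2^{\ast}-V_1V_1^{\ast}\bigr)$, and since $\rk W=\rk V$, condition (i) reads: $\rk(V_1,V_2)=nd$ and $V_1V_1^{\ast}\le V_2V_2^{\ast}$.

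It then remains to prove the purely linear-algebraic equivalence
\[
\bigl[\rk(V_1,V_2)=nd\ \text{and}\ V_1V_1^{\ast}\le V_2V_2^{\ast}\bigr]\iff\bigl[V_2\ \text{invertible and}\ \|V_2^{-1}V_1\|\le1\bigr].
\]
For the forward direction, if $V_2^{\ast}z=0$ then $V_1V_1^{\ast}\le V_2V_2^{\ast}$ forces $\|V_1^{\ast}z\|^2\le\|V_2^{\ast}z\|^2=0$, so $V^{\ast}z=0$ and hence $z=0$ by the rank hypothesis; thus $V_2$ is invertible, and conjugating $V_1V_1^{\ast}\le V_2V_2^{\ast}$ by $V_2^{-1}$ gives $(V_2^{-1}V_1)(V_2^{-1}V_1)^{\ast}\le1$, i.e.\ $\|V_2^{-1}V_1\|\le1$. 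The converse follows by reversing these steps, invertibility of $V_2$ already giving full rank of $V$. Chaining the three equivalences yields (i) $\Leftrightarrow$ (ii). I expect the evaluation of $N$ in the second step to be the only real obstacle: it is where the full structure $Q=Q_+^2-Q_-^2$ enters, and the sign bookkeeping in $Q_\pm Q^{-1}Q_\pm=\pm P_\pm$ together with the vanishing of the cross terms must be handled carefully; once $N=\tfrac12\diag(-1,1)$ is in hand, everything before and after is routine.
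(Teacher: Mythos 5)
Your proof is correct; note that the paper itself does not prove this lemma at all --- it imports it verbatim from \cite[Lemma 2.3]{Trostorff_Waurick_2023} --- so there is no in-paper argument to compare against, and your write-up would in fact make the paper self-contained on this point. I checked the three steps. The reduction of (ii) to ``$V_2$ invertible and $\|V_2^{-1}V_1\|\leq1$'' via $(Q_{-}-MQ_{+},\,Q_{+}-MQ_{-})=(-M,\,1)R$ is right, and the invertibility of $R$ is indeed established at the end of the proof of \prettyref{prop:bd_system_pH}. The computation $T^{-1}\left(\begin{smallmatrix}0&1\\1&0\end{smallmatrix}\right)(T^{-1})^{\ast}=\tfrac12\diag(-Q^{-1},Q^{-1})$ checks out, and the evaluation $N=\tfrac12\diag(-1,1)$ is the genuine crux: it rests on $Q$ (hence $Q^{-1}$) being block-diagonal with respect to $E_{+}\oplus E_{-}$, so that $\iota_{\pm}^{\ast}Q^{-1}\iota_{\pm}=(\iota_{\pm}^{\ast}Q\iota_{\pm})^{-1}$, which gives $Q_{+}Q^{-1}Q_{+}=P_{+}$, $Q_{-}Q^{-1}Q_{-}=-P_{-}$ and the vanishing of the cross terms; your signs then combine to $-P_{+}-P_{-}=-1$ and $P_{-}+P_{+}=1$ in the diagonal blocks, as claimed. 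The final linear-algebra equivalence (full row rank plus $V_{1}V_{1}^{\ast}\leq V_{2}V_{2}^{\ast}$ versus invertibility of $V_{2}$ plus $\|V_{2}^{-1}V_{1}\|\leq1$) is handled correctly, the only point worth spelling out being that $\|V_{2}^{-1}V_{1}\|\leq1$ is equivalent to $(V_{2}^{-1}V_{1})(V_{2}^{-1}V_{1})^{\ast}\leq1$ because a matrix and its transpose have the same operator norm. If you would explicitly justify in one line why $Q^{-1}$ preserves $E_{\pm}$ (namely, $E_{\pm}$ are sums of eigenspaces of the symmetric matrix $Q$), the argument is complete.
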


Using this result, we obtain the following variant of \prettyref{thm:pH_m-accretive}
in the linear case, which can be found e.g. in \cite[Theorem 3.3.6]{Augner2016}.
\begin{prop}
In the situation of \prettyref{thm:pH_m-accretive} let $B\subseteq A$
be linear. Then the following statements are equivalent:

\begin{enumerate}[(a)]

\item $B$ is m-accretive on $H$,

\item there exists a matrix $M\in\R^{nd\times nd}$ with $\|M\|\leq1$
such that 
\[
\dom(B)=\{u\in\dom(A)\,;\,M(Q_{+}\tr_{b}\mathcal{H}u+Q_{-}\tr_{a}\mathcal{H}u)=Q_{-}\tr_{b}\mathcal{H}u+Q_{+}\tr_{a}\mathcal{H}u\},
\]

\item there exists a matrix $W\in\R^{nd\times2nd}$ with full rank
and 
\begin{equation}
W\left(\begin{array}{cc}
-Q & Q\\
1 & 1
\end{array}\right)^{-1}\left(\begin{array}{cc}
0 & 1\\
1 & 0
\end{array}\right)\left(W\left(\begin{array}{cc}
-Q & Q\\
1 & 1
\end{array}\right)^{-1}\right)^{\ast}\geq0\label{eq:pos_def_W}
\end{equation}
such that 
\[
\dom(B)=\left\{ u\in\dom(A)\,;\,W\left(\begin{array}{c}
\tr_{b}\mathcal{H}u\\
\tr_{a}\mathcal{H}u
\end{array}\right)=0\right\} .
\]

\end{enumerate}
\end{prop}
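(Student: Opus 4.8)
The plan is to prove the equivalence of the three conditions by establishing a cycle of implications, using the already-proven characterisation in \prettyref{thm:pH_m-accretive} together with \prettyref{prop:linear} and \prettyref{lem:Moppi+ich}. The central observation is that in the linear setting, the contraction $g$ appearing in \prettyref{thm:pH_m-accretive} must itself be linear by \prettyref{prop:linear}(b), so it is given by a matrix $M \in \R^{nd \times nd}$ with $\|M\| \le 1$. This immediately links condition (a) with condition (b), and the remaining work is to translate the boundary-condition description in (b) into the matrix-kernel form in (c).

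First I would show (a) $\Leftrightarrow$ (b). By \prettyref{thm:pH_m-accretive}, $B$ is m-accretive if and only if there is a contraction $g \colon \R^{nd} \to \R^{nd}$ describing $\dom(B)$ through the stated boundary relation. Since $B$ is assumed linear, \prettyref{prop:linear}(b) forces $g$ to be linear, hence $g(x) = Mx$ for a matrix $M$ with $\|M\| = |g|_{\mathrm{Lip}} \le 1$; conversely any such $M$ yields a linear contraction. Substituting $g = M$ into the boundary relation of \prettyref{thm:pH_m-accretive}(ii) gives exactly the condition $M(Q_+\tr_b\mathcal{H}u + Q_-\tr_a\mathcal{H}u) = Q_-\tr_b\mathcal{H}u + Q_+\tr_a\mathcal{H}u$ in (b), so the two are equivalent.

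Next I would show (b) $\Leftrightarrow$ (c) by a purely linear-algebraic manipulation connecting the two descriptions of $\dom(B)$. The defining relation in (b) can be rewritten as a homogeneous linear condition on the boundary data $(\tr_b\mathcal{H}u, \tr_a\mathcal{H}u)$, namely
\[
\bigl(Q_- - MQ_+\bigr)\tr_b\mathcal{H}u + \bigl(Q_+ - MQ_-\bigr)\tr_a\mathcal{H}u = 0,
\]
which is precisely $W\bigl(\tr_b\mathcal{H}u, \tr_a\mathcal{H}u\bigr)^{\top} = 0$ for the block matrix $W = \bigl(Q_- - MQ_+ \;\; Q_+ - MQ_-\bigr)$. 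By \prettyref{lem:Moppi+ich}, the matrices $W$ arising this way (up to left-multiplication by an invertible $K$, which does not change $\ker W$) are exactly the full-rank matrices satisfying the positivity condition \prettyref{eq:pos_def_W}. The key point is that multiplying $W$ on the left by an invertible $K$ leaves the kernel — and hence $\dom(B)$ — unchanged, so the freedom in $K$ is harmless for describing the domain.

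The main obstacle I expect is the careful bookkeeping in the equivalence (b) $\Leftrightarrow$ (c): one must verify that the full-rank hypothesis on $W$ is automatic for $W = \bigl(Q_- - MQ_+ \;\; Q_+ - MQ_-\bigr)$ (which should follow from the invertibility of $\bigl(\begin{smallmatrix} Q_+ & Q_- \\ Q_- & Q_+ \end{smallmatrix}\bigr)$ established in \prettyref{prop:bd_system_pH} together with $\|M\| \le 1$), and conversely that every admissible $W$ from \prettyref{lem:Moppi+ich} indeed produces a contraction $M$ with the same kernel. The invertible factor $K$ in \prettyref{lem:Moppi+ich}(ii) is the delicate ingredient: it must be absorbed without altering $\ker W$, so I would emphasise that the boundary condition $Wx = 0$ depends only on $\ker W = \ker(KW_0)$ for the normalised $W_0 = \bigl(Q_- - MQ_+ \;\; Q_+ - MQ_-\bigr)$. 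Once this is made precise, the chain (a) $\Leftrightarrow$ (b) $\Leftrightarrow$ (c) closes and the proposition follows.
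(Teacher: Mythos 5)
Your proposal is correct and follows essentially the same route as the paper: (a) $\Leftrightarrow$ (b) via \prettyref{thm:pH_m-accretive} together with \prettyref{prop:linear}(b), and (b) $\Leftrightarrow$ (c) via \prettyref{lem:Moppi+ich} with $W=\bigl(Q_{-}-MQ_{+}\;\;Q_{+}-MQ_{-}\bigr)$, absorbing the invertible factor $K$ by observing it leaves $\ker W$ unchanged. The full-rank property you flag as a possible obstacle is already supplied by the implication (ii) $\Rightarrow$ (i) of \prettyref{lem:Moppi+ich} itself, so no extra argument is needed there.
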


\begin{proof}
The equivalence of (i) and (ii) is a direct consequence of \prettyref{thm:pH_m-accretive},
see also \prettyref{prop:linear} (b). For the implication (ii) $\Rightarrow$(iii),
we set $W\coloneqq\left(\begin{array}{cc}
Q_{-}-MQ_{+} & Q_{+}-MQ_{-}\end{array}\right).$ Then $W$ satisfies \prettyref{eq:pos_def_W} by \prettyref{lem:Moppi+ich}
and clearly
\[
M(Q_{+}\tr_{b}\mathcal{H}u+Q_{-}\tr_{a}\mathcal{H}u)=Q_{-}\tr_{b}\mathcal{H}u+Q_{+}\tr_{a}\mathcal{H}u\Leftrightarrow W\left(\begin{array}{c}
\tr_{b}\mathcal{H}u\\
\tr_{a}\mathcal{H}u
\end{array}\right)=0.
\]
For (iii) $\Rightarrow$(ii), we note that there exists $M,K\in\R^{nd\times nd}$
with $K$ invertible and $\|M\|\leq1$ such that 
\[
W=K\left(\begin{array}{cc}
Q_{-}-MQ_{+} & Q_{+}-MQ_{-}\end{array}\right)
\]
by \prettyref{lem:Moppi+ich}. Since $\ker W=\ker\left(\begin{array}{cc}
Q_{-}-MQ_{+} & Q_{+}-MQ_{-}\end{array}\right),$ the equality for $\dom(B)$ follows as above.
\end{proof}

\end{document}